\documentclass[english]{smfart}

  \usepackage{macros}

\author{Beno\^{\i}t Kloeckner}
\title{The space of closed subgroups of $\bR^n$}

\newcommand{\chab}{\mathscr{C}}

\newcommand{\dotprod}{\cdot}

\begin{document}

\begin{abstract}
The Chabauty space of a topological group is the 
set of its closed subgroups, endowed with a natural topology. As soon as $n>2$,
the Chabauty space of $\bR^n$ has a rather intricate topology and is not a manifold.
By an investigation of its local structure, 
we fit it into a wider, but not too wild, class
of topological spaces (namely Goresky-MacPherson stratified spaces).
Thanks to a localization theorem, this local study also
leads to the main result of this article: the Chabauty space of $\bR^n$ 
is simply connected for all $n$.
Last, we give an alternative proof of the Hubbard-Pourezza Theorem, which
describes the Chabauty space of $\bR^2$.
\end{abstract}

\maketitle

\section{Introduction}

Let $G$ be a topological group whose neutral element is denoted by $0$
(although $G$ need not be abelian). 
Its \emph{Chabauty space} $\chab(G)$ is the set
of closed subgroups of $G$ endowed with the following topology: the neighborhoods
of a point $\Gamma\in\chab(G)$ are the sets
$$\mathcal{N}_U^K(\Gamma)=\ensemble{\Gamma'\in G}{\Gamma'\cap K\subset \Gamma\cdot U\mbox{ and }
\Gamma\cap K\subset \Gamma'\cdot U}$$
where $K$ runs over the compact subsets of $G$ and $U$ runs over the neighborhoods of $0$.
In words, $\Gamma'$ is very close to $\Gamma$ if, on a large compact set, every of its elements
is in a uniformly small neighborhood of an element of $\Gamma$, and conversely. 
The preprint \cite{delaHarpe} contains a more detailed account of this topology.

The Chabauty space is named after Claude Chabauty, who introduced it in \cite{Chabauty}
to generalize Mahler's compactness criterion
to lattices in locally compact groups. If $G$ is locally compact, then $\chab(G)$
is compact and can therefore be used to define a compactification
of any space whose points are naturally associated to closed subgroups of $G$.
For example, this is the case of a symmetric space of noncompact type: one 
maps a point to its stabilizer in the isometry group. The corresponding
compactification is isomorphic to the Satake compactification
\cite{Satake,Borel-Ji}. This compactification was generalized to buildings
thanks to the Chabauty topology point of vue in \cite{Guivarch-Remy}.

The simplest example of a Chabauty space is that of the line: $\chab(\bR)$ contains the trivial subgroup
$\{0\}$, the discrete groups $\alpha\bZ$ and the total group $\bR$. Two
discrete groups $\alpha\bZ$ and $\beta\bZ$ are close one to another when
$\alpha$ and $\beta$ are close, a neighborhood of $\{0\}$ consists in
the set of $\alpha\bZ$ with large $\alpha$ (and we define
$\infty\bZ=\{0\}$) and a neighborhood of $\bR$ consists in the set of
$\alpha\bZ$ with small $\alpha$ (and we define $0\bZ=\bR$). Putting all this together,
we see that $\chab(\bR)$ is homeomorphic to a closed interval.

\begin{figure}[htp]\begin{center}
\input{chab_line.pstex_t}
\caption{Chabauty space of $\bR$.}
\end{center}\end{figure}

Only for a few groups $G$ is known a precise description of $\chab(G)$.
Recent work of Bridson, de la Harpe and Kleptsyn adds to the list
the three-dimensional Heisenberg group \cite{BHK}, but the topology of 
$\chab(\bR^n)$ is \textit{terra incognita} for $n>2$.
Even $\chab(\bR^2)$ is uneasy to describe; it was tackled by Hubbard and Pourezza \cite{Hubbard-Pourezza}
who proved the following. 

\begin{theoalph}[Hubbard-Pourezza]\label{theo:HP}
Let $\chab$ be the Chabauty space of $\bR^2$ and $\mathscr{L}$ be the subset of lattices.
The topological pair $(\chab,\chab\smallsetminus\mathscr{L})$ is homeomorphic to the suspension of
$(S^3,K)$ where $K$ is a trefoil knot in the $3$-sphere. In particular,
$\chab$ is a $4$-sphere.
\end{theoalph}

Let us recall some definitions. A \emph{topological pair} is a pair $(X,Y)$ of
topological spaces where $Y$ is a subset of $X$ (endowed with the induced topology).
Two topological pairs $(X,Y)$ and $(X',Y')$ are \emph{homeomorphic} if
there is a homeomorphism $\Phi: X\to X'$ that maps $Y$ onto $Y'$. The \emph{(open) cone}
over $X$ is the quotient
$cX$ of $X\times [0,1)$ by the relation $(x_0,0)\sim(x_1,0)$, while
the \emph{suspension}
of $X$ is the quotient $sX$ of $X\times [0,1]$ by the relations
$(x_0,0)\sim(x_1,0)$ and $(x_0,1)\sim(x_1,1)$ for all $x_0,x_1\in X$.
If $Y$ is a subset of $X$, then $sY$ embeds naturally in $sX$
and the resulting topological pair $(sX,sY)$ is called
the suspension of $(X,Y)$. 

The Hubbard-Pourezza theorem shows in particular that the set of non-lattices is a $2$-sphere
that is \emph{non-tamely} embedded in $\chab(\bR^2)$. At the end of the paper
we shall give a proof of Theorem \ref{theo:HP} using Seifert fibrations. It is likely to be a variation of the alternative
proof alluded to in \cite{Hubbard-Pourezza}, but we provide it so that this article is self-contained;
we shall indeed see that Theorem \ref{theo:HP} gives information on the ``links'' of some points
in $\chab(\bR^n)$ for higher $n$.

Our main goal is to investigate the space $\chab(\bR^n)$. It is not
a manifold when $n>2$, and our first aim is to show that
it fits into the more general framework of stratified spaces.

\begin{theoalph}\label{theo:stratif}
For all $n$, the Chabauty space of $\bR^n$ admits a Goresky-MacPherson
stratification. If $n\geqslant 2$ it is moreover a pseudo-manifold.
\end{theoalph}

We shall give  Goresky and MacPherson's definitions of a stratification 
and pseudo-manifold later on; roughly, it means that $\chab(\bR^n)$ is a union of
manifolds nicely glued together. Compact Goresky-MacPherson
stratified spaces have for example a well-defined intersection homology,
locally contractible homeomorphism group and extension of isotopy properties.

To prove Theorem \ref{theo:stratif} we shall unveil part of the
\emph{local} topology of $\chab(\bR^n)$.
Its global topology seems difficult to make completely explicit.
It might be possible to describe precisely $\chab(\bR^3)$,
but as $n$ grows the space becomes more and more complicated. Even if one
describes each of its strata, the way they glue together is quite involved
(even if locally trivial in some sense). In such a case, one tries
to compute some topological invariants to get a grip on the space,
the first one being its fundamental group. Our main result is the following.

\begin{theoalph}\label{theo:main}
For all $n$, the Chabauty space of $\bR^n$ is simply connected.
\end{theoalph}

Let us give a sketch of the proof of this result; in the sequel we often
use the notation $\chab:=\chab(\bR^n)$.
The subset $\mathscr{R}_m\subset \mathscr{C}$ of maximal rank subgroups (that is, subgroups
containing a basis of vectors of $\bR^n$) is open, dense and contractible.
Its complement $\mathscr{R}_\ell:=\chab\smallsetminus\mathscr{R}_m$ is a subspace of codimension $n$. If $\chab$
 where a differentiable
manifold, we could have proceeded by tansversality arguments:
any loop based in a point of $\mathscr{R}_m$ would be homotopic to a generic smooth loop,
transversal to $\mathscr{R}_\ell$. But transversality between a submanifold
of codimension $>1$ and a curve would imply that they do not meet,
and since $\mathscr{R}_m$ is contractible the loop would then be nullhomotopic.

One must be very carefull when trying to apply these
arguments in more general spaces. For example, the cone over
a disconnected manifold is a stratified space,
 and it is not true that a generic curve avoids
its apex.
\begin{figure}[htp]\begin{center}
\includegraphics[scale=.8]{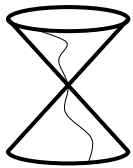}
\caption{A generic curve does not avoid the apex.}
\end{center}\end{figure}

This example is however very local in nature, and one guesses that
if no such phenomenon occurs, then one should be able to
proceed almost as if $\chab$ where a manifold. This guess is true,
as is shown by the following relative homotopy localization result.

\begin{theoalph}[localization]\label{theo:localization}
Let $X$ be a Hausdorff topological space
and $Y$ be a closed subset of $X$.

If every point $y\in Y$ admits a neighborhood system $(U_\varepsilon)_\varepsilon$
such that each pair $(U_\varepsilon,U_\varepsilon\smallsetminus Y)$ is $k$-connected,
then the pair $(X,X\smallsetminus Y)$ is $k$-connected.
\end{theoalph}

In other words, under very mild assumptions a pair that is locally
$k$-connected must be globally $k$-connected. Let us recall
that a pair $(X,U)$ is $0$-connected if any point in $X$ can be connected
by a continuous path to a point in $U$. The pair is $k$-connected if moreover
for all $\ell\leqslant k$, every map $(I^\ell,\partial I^\ell)\to (X,U)$
from the closed cube $I^\ell$ that maps its boundary into $U$ is homotopic 
(relative to its boundary) to a map $I^\ell\to U$.

It would be very surprising that such a simple and helpful result be new, however
I could not find a reference in the litterature (except
\cite{Eyral} where it is proved only for polyhedral pairs) and we shall therefore
provide a proof.

The topics of the next sections are: some preliminaries and definitions (Section \ref{sec:def}),
 stratifications (Section \ref{sec:strat}),
the local study of $\chab(\bR^n)$ --including the proof of Theorem \ref{theo:stratif}
(Section \ref{sec:local}), proofs
of Theorems \ref{theo:main} and \ref{theo:localization} (Section \ref{sec:global}),
alternative proof of Theorem \ref{theo:HP} (Section \ref{sec:plane}), and some open
questions (Section \ref{sec:open}).

\paragraph*{Aknowledgements}
I wish to thank Lucien Guillou for topological discussions,
Patrick Massot for patiently explaining Seifert fibration to me,
and Pierre de la Harpe for both his comments on this article and his
talk during the fourteenth Tripode meeting in Lyon, which made me discover
the nice topic of Chabauty spaces.

\section{Types and norms}\label{sec:def}

We fix $n\in\bN$ and consider the Chabauty space $\chab=\chab(\bR^n)$.

Let $\dotprod$ denote the canonical scalar product on $\bR^n$ and
$|\phantom{x}|$ denote the corresponding Euclidean norm. It defines a distance not only on $\bR^n$,
but also on the Grassmannian $G(p;n)$ of all its $p$-dimensional sub-vector spaces
($p$-planes, for short). There are several classical ways to do this, but they do not differ for our purpose.

We denote under brackets $\langle\ \rangle$ the vector space generated by a subset
of $\bR^n$.

\subsection{Types}

Let $\Gamma$ be a point in $\chab$. It is isomorphic to $\bR^p\times\bZ^q$ for some integers
$p,q$. The pair $(p,q)$ is called the \emph{type} of $\Gamma$.
The \emph{rank} of $\Gamma$ is the dimension of the
vector space $\langle \Gamma\rangle$ it generates, that is $p+q$.
An element of rank $<n$ is said to be of lower rank. We denote by $\mathscr{R}_\ell$ the set of 
lower rank elements of $\chab$,
by $\mathscr{R}_m$ its complement and by $\chab^{(p,q)}$ the set of type $(p,q)$ elements.
The Lie group $\GL(n;\bR)$ acts naturally on $C$ and its orbits are exactly the $\chab^{(p,q)}$.

\subsection{Norms}

Let $\Gamma$ be a type $(p,q)$ point in $C$.
For all positive $r$,
let $\Gamma(r)$ be the subgroup of $\bR^n$ generated by $\Gamma\cap \overline{B}(0,r)$ where
$\overline{B}(0,r)$ is the closed ball of radius $r$ centered at the origin. Let
$\Gamma_0:=\cap_{r>0} \Gamma_r$ be the \emph{continuous part} of $\Gamma$. It is a $p$-plane 
of $\bR^n$. 

If $p>0$, then define
$$N_1(\Gamma)=\ldots=N_p(\Gamma)=0.$$
Let $r_1$ be the least
number $r$ such that $\Gamma(r)\neq \Gamma_0$ and $p_1$ be the rank of $\Gamma(r_1)$. Then define
$$N_{p+1}(\Gamma)=\ldots=N_{p_1}(\Gamma)=r_1.$$
Define similarly $(r_2,p_2),\ldots,(r_k,p_k)$ until $\Gamma(r_k)=\Gamma$.
Then one has $q=p_k-p$. At last, define 
$$N_{p+q+1}(\Gamma)=\ldots=N_n(\Gamma)=\infty.$$

The number $N_i(\Gamma)$ is called the $i$-th norm of $\Gamma$.
The norms are continuous
functions $N_i:C\to[0,\infty]$.

\subsection{Decomposition}

Any $\Gamma\in C$ has a \emph{canonical decomposition} $\Gamma=\Gamma_0+\Gamma_D$, where
$\Gamma_D:=\Gamma\cap\Gamma_0^\perp$ is a  discrete rank $q$ subgroup of $\bR^n$.

\begin{figure}[htp]\begin{center}
\input{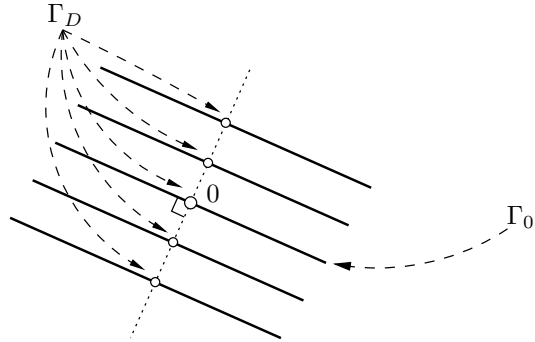}
\caption{Canonical decomposition of a type $(1,1)$ subgroup of $\bR^2$.}
\end{center}\end{figure}

\subsection{Combinatorial Structure}\label{sec:combin}

Let us consider the incidence scheme of the different $\chab^{(p,q)}$; these subsets
are natural ``strata'' of $\chab$.

If $n=1$, there are three types, namely those of $\bR$, $\bZ$ and the trivial group,
from now on denoted by $0$.
The closure of the second one contains the two other (both reduced to a point).
We sum this up into the diagram:
$$
\xymatrix@!=4pt{
    &\bZ \ar[ld]\ar[rd]&\\
\bR &                &  0
}
$$

If $n=2$, there are six types, organized according to the diagram:
$$
\xymatrix@!=4pt{
 & & \bZ^2\ar[ld]\ar[rd] & & \\
&\bR\times\bZ \ar[ld]\ar[rd]&& \bZ\ar[ld]\ar[rd]\\
\bR^2 &&  \bR && 0
}
$$

The diagram in the general case is:
$$
\xymatrix@!=4pt{
 & & & & \bZ^n\ar[dl]\ar[dr] & & & & \\
 & & & \bR\times\bZ^{n-1} \ar[dl] \ar[dr] & & \bZ^{n-1}\ar[dl]\ar[dr] & & &\\
 & & \cdots\ar[dl] & & \cdots & & \cdots\ar[dr] & &\\
&\bR^{n-1}\times\bZ\ar[dl]\ar[dr]&  & \ar[dl] & \cdots &\ar[dr] &  & \bZ\ar[dr]\ar[dl] & \\
\bR^n & &\bR^{n-1} &  &\cdots & & \bR & & 0 \\
}
$$
to be read as follows: the closure of the orbit of type $(r,s)$ intersects
the orbit of type $(p,q)$ if and only if there is a sequence of arrows
$\bR^r\times\bZ^s\to\cdots\to\bR^p\times\bZ^q$ (in which case we write
$(r,s)\geqslant(p,q)$).
A sequence of type $(r,s)$ elements can indeed converge to a point of a different type
in two (possibly simultaneous) ways: some of the non-zero, finite $N_i$ go to $0$ or to $\infty$.
In both cases $s$ decreases;
in the first one the rank is constant and $r$ increases while in the second one
$r$ is constant and the rank decreases. In other words, $(r,s)\geqslant(p,q)$ if and only if
$r\leqslant p$ and $r+s\geqslant p+q$: this is exactly what the diagram tells. 

Note that for example each (lower left)-(upper right) diagonal corresponds to the subgroups of a given rank.
In particular the largest of these diagonals corresponds to the set $R$ of higher rank subgroups.
Similarly, each (upper left)-(lower right) diagonal corresponds to the subgroups with continuous part of a given dimension,
in particular the largest of these diagonals correspond to the set of discrete subgroups.

\subsection{Duality}\label{sec:duality}

There is a well-known duality on the space of lattices of $\bR^n$. It extends word by word
to the larger space $\chab(\bR^n)$: the duality map
\begin{eqnarray*}
\ast : \chab(\bR^n) &\to& \chab(\bR^n) \\
         \Gamma &\mapsto&\Gamma^\ast=\ensemble{y\in\bR^n}{\forall x\in\Gamma\quad x\dotprod y \in\bZ}
\end{eqnarray*}
is an involutory homeomorphism. The dual of a type $(p,q)$ element is of type
$(n-(p+q),q)$. In particular, in the terminology to be introduced in the next section,
$\ast$ is a stratified isomorphism. On the above diagrams, duality induces a reflexion
with respect to a vertical axis and one only needs to understand half of the types to understand them all.

\begin{figure}[htp]\begin{center}
\input{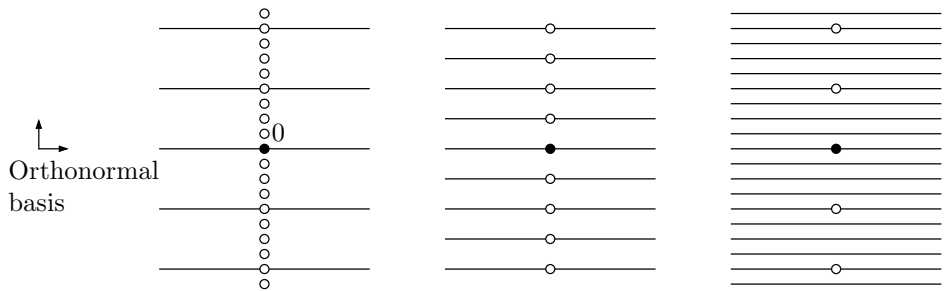}
\caption{In $\bR^2$, three type $(1,1)$ subgroups (dark lines) and their duals (white points).}
\end{center}\end{figure}

\section{Stratifications}\label{sec:strat}

There are many different types of stratifications; we shall use that of Mark Goresky and Robert 
MacPherson \cite{Goresky-MacPherson2}, but we also introduce more general definitions and that of 
Larry Siebenmann \cite{Siebenmann}.

\subsection{General definitions}

\begin{defi}\label{defi:stratif}
Let $X$ be a metrizable separable topological space. A \emph{stratification} of $X$ is a locally finite partition
$\mathscr{S}=(X^{(s)})_{s\in S}$ into locally closed subsets called \emph{strata} such that the
\emph{frontier condition} holds: for all $s,t$ in $S$, if $X^{(t)}\cap \overline{X}\null^{(s)}\neq\varnothing$
then $X^{(t)}\subset \overline{X}\null^{(s)}$. In other words, the closure of a stratum is a union of
strata. The couple $(X,\mathscr{S})$, often simply denoted by $X$, is called a \emph{stratified space}.
\end{defi}

In the works of Siebenmann and Goresky and MacPherson, the stratification are filtered by $\{0,1,\ldots,n\}$
rather than a more general set $S$. However, in the considered cases (CAT stratifications 
of finite dimension, see below) one can recover such a filtration, so that the above definition
is in fact consistent with \cite{Siebenmann} and \cite{Goresky-MacPherson2}.

The point in stratifying a space is to divide it into simple pieces, and the strata should not
be arbitrary for the stratification to be of interest. Most of the time, one asks that the strata
belong to a category of manifolds and think of stratified spaces as an extension of manifolds
that includes some singularities. The main motivation when Hassler Whitney introduced the first definition
of a stratified space was to study the topology of analytic varieties \cite{Whitney1,Whitney2,Whitney3}.
Ren\'e Thom used this concept to investigate the smooth maps between manifolds and their singularities
\cite{Thom}.
Denote respectively
by TOP, PL and DIFF the categories of topological, piecewise linear and smooth manifolds.

\begin{defi}
Let CAT be a category of manifolds (TOP, PL, or DIFF).
A stratification $(X^{(s)})_{s\in S}$ is a \emph{CAT stratification} if all strata are objects of CAT 
and $X^{(t)}\subset \overline{X}\null^{(s)}$ implies $\dim X^{(t)}<\dim X^{(s)}$.

The \emph{dimension} $d$ of a CAT stratification is the supremum of the dimensions of the strata (possibly
$\infty$). Its \emph{singular codimension} is the difference $d-d'$ where $d'$ is the second largest dimension
of the strata.
\end{defi}

The frontier condition is not very surprising since it is similar to a property 
of polyhedra (that is, topological realisation of simplicial complexes), where the closure
of a face is a union of faces. To show its particular relevance, let us introduce
a relation on the index set $S$ of a stratification. 
Given two strata $X^{(s)}$ and $X^{(t)}$, one writes $t\leqslant s$ if 
$X^{(t)}\cap\overline{X}\null^{(s)}\neq\varnothing$. It is easy to see that the frontier condition 
implies that $\leqslant$ is an ordering (the local closedness of strata is needed as well).

\subsection{Goresky-MacPherson stratifications}

The class of stratified spaces is stable under several natural operations. The fact
that the partitions given below are genuine stratifications is straightforward.

\begin{defi}
If $X$ and $Y$ are  stratified spaces with stratifications
$$\mathscr{S}=(X^{(s)})_{s\in S}\mbox{ and }\mathscr{T}=(Y^{(t)})_{t\in T}$$
the \emph{product}
$X\times Y$ is defined as the usual topological product endowed with the stratification
$$\mathscr{S}\times\mathscr{T}=\left(X^{(s)}\times Y^{(t)}\right)_{(s,t)\in S\times T}$$

Every open subset $U$ of $X$ inherits of the induced stratification $(U\cap X^{(s)})_{s\in S'}$
where $S'$ is the set of indices $s$ such that $X^{(s)}$ meets $U$.

The open cone $cX=X\times [0,1)/X\times\{0\}$ of a compact stratified space has a natural stratification, 
whose strata are the apex and the products $X^{(s)}\times(0,1)$. Such a $cX$ is called
a \emph{stratified cone}. The cone on the empty set is defined to be a point.

Recall that the \emph{join} of the topological spaces $X$ and $Y$ is the 
quotient of $X\times Y\times [0,1]$ by the relations
$(x,y,0)\sim (x',y,0)$ and $(x,y,1)\sim (x,y',1)$ (figure \ref{fig:join} shows examples).
We simply denote by $A\times\{1\}$ the image in this quotient of a set $A\times Y\times\{1\}$
when $A\subset X$.
When $X$ and $Y$ are stratified, their join $X\star Y$
can be endowed with a natural stratification.
Let $S\star T$ be the disjoint union of $S$, $T$ and
$S\times T$; the desired stratification $\mathscr{S}\star\mathscr{T}$ 
is indexed by $S\star T$, with strata
\begin{eqnarray*}
(X\star Y)^{(s)} &=& X^{(s)}\times\{1\} \\
(X\star Y)^{(t)} &=& Y^{(t)}\times\{0\} \\
(X\star Y)^{(s,t)} &=& X^{(s)}\times Y^{(t)}\times (0,1)
\end{eqnarray*}
Remark that the suspension of a space is simply its join with $S^0$, a pair of
distinct points.
\end{defi}

\begin{figure}[htp]\begin{center}
\includegraphics{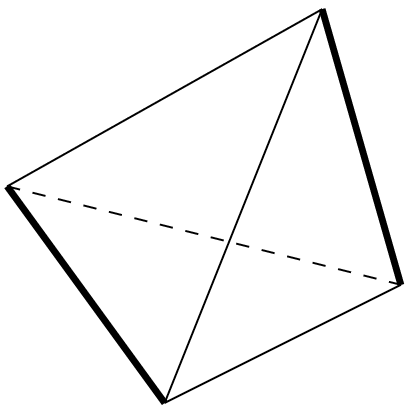}\hspace{2cm}\includegraphics{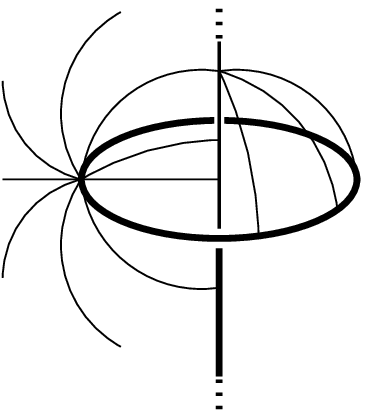}
\caption{The join of two segments is a $3$-simplex. The join of two circles is a $3$-sphere,
the contracted part being two Hopf fibers.}\label{fig:join}
\end{center}\end{figure}

Next we need to define isomorphisms.

\begin{defi}
If $X$ and $Y$ are stratified spaces,
a continuous map  $f:X\to Y$ is said to be \emph{stratified} if the inverse image of every stratum
of $Y$ is a stratum of $X$. It is a \emph{stratified isomorphism} (or simply, an isomorphism)
if it is stratified and a homeomorphism.

One defines in an obvious way \emph{PL} and \emph{DIFF stratified maps} and \emph{isomorphisms}.
\end{defi}

We are now able to introduce a local triviality condition on which the notions of
Siebenmann and Goresky-MacPherson stratified spaces rely.

\begin{defi}
A stratified space $X$ is \emph{locally cone-like} if for any point $x\in X$,
there is an open neighborhood $U$ of $x$ in its stratum $X^{(s)}$, a stratified cone $cL$ and an isomorphism
of $U\times cL$ onto an open neighborhood of $x$ in $X$ such that $U\times\{\mbox{apex}\}$ is mapped
identically onto $U$. The stratified space $L$ is called a \emph{link} of $x$ (it need not be
unique since there exist non homeomorphic spaces whose cones are homeomorphic).

A \emph{Siebenmann stratified space}
is a finite-dimensional, locally cone-like, TOP stratified space.

A \emph{Goresky-MacPherson stratified space} of dimension $n$ is defined recursively
as a $n$-dimensional Siebenmann stratified space, whose points admit links that
are lesser-dimensional Goresky-MacPherson stratified spaces.
One can define similarly PL and DIFF Siebenmann and Goresky-MacPherson stratified spaces.

A \emph{pseudo-manifold} is a Goresky-MacPherson stratified space where the union of maximal dimensional strata
is dense, and whose singular codimension is at least $2$.
\end{defi}

What we call a Siebenmann stratified space is a ``CS set'' in Siebenmann's terminology. Due to
the numerous definitions introduced by different authors, it seems better to use the authors' names
to distinguish between them. 
Note that in \cite{Goresky-MacPherson1}, contrary to \cite{Goresky-MacPherson2},
the stratified spaces considered are PL.

It seems to be an open question whether it exists a Siebenmann stratified space
that is not Goresky-MacPherson stratified.

Simple examples of Goresky-MacPherson stratified spaces are manifold with boundary (the strata
being the interior and the boundary) and polyhedron (stratified by their faces). A consequence
of Thom's ``first isotopy lemma'' is that analytic varieties can be endowed with a Goresky-MacPherson
stratification, see for example \cite{Goresky-MacPherson3} Section I.1.4.
In particular, complex analytic varieties are pseudo-manifolds.

\subsection{Some properties}

Compact Siebenmann stratified spaces have several nice properties. For example, their homeomorphism
groups are locally contractible. See \cite{Siebenmann} for more details. More important to us,
it gives a very natural way to describe $\chab=\chab(\bR^n)$ locally: the types will index the strata
$\chab^{(p,q)}$ and since their dimension is easy to compute, the description of neighbohoods of a point
reduces to a link.

Among Goresky-MacPherson stratified spaces, pseudo-manifolds are of utmost importance since they
have a so-called intersection homology satisfying some sort of Poincaré duality. It encodes in particular
the usual homology. Since it is not much more difficult to prove that $\chab(\bR^n)$ is
Goresky-MacPherson than to prove that it is Siebenmann, it seemed better to use this definition
even if we do not compute the intersection homology of $\chab(\bR^n)$. Note that
the codimensions of the strata need not be even, so that there is
no self-dual perversity for $\chab(\bR^n)$.

Let us turn to a remarkable fact: the product of two Siebenmann stratified space is Siebenmann
stratified. The only part that is not obvious in this statement is that a product
of two cones is again a cone.

\begin{lemm}\label{lemm:pdt_cones}
Let $X$ and $Y$ be stratified spaces. Then we have an isomorphism of stratified
spaces $cX\times cY\simeq c(X\star Y)$.
\end{lemm}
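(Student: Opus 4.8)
The plan is to exhibit an explicit stratified isomorphism $\Phi\colon cX\times cY\to c(X\star Y)$ together with its inverse, built from a single ``polar-type'' change of coordinates on the two radial parameters. Since $[0,1)$ is homeomorphic to $[0,\infty)$, I would first replace the defining interval of each cone by $[0,\infty)$, so that a point of $cX$ is written $[x,s]$ with $x\in X$ and $s\in[0,\infty)$ (the apex being $s=0$, independent of $x$), and likewise $[y,t]$ for $cY$. A point of $c(X\star Y)$ is then $[[x,y,u],r]$ with $u\in[0,1]$ the join parameter (recall that $u=1$ retains $X$ and $u=0$ retains $Y$, per the conventions above) and $r\in[0,\infty)$ the outer cone parameter.

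The map I would use is
\[
\Phi\bigl([x,s],[y,t]\bigr)=\Bigl[\bigl[x,y,\tfrac{s}{s+t}\bigr],\,s+t\Bigr],
\]
with the convention that the right-hand side is the apex when $s+t=0$; its inverse sends $[[x,y,u],r]$ to $\bigl([x,ru],[y,r(1-u)]\bigr)$. The underlying bijection of parameters is the map $(s,t)\mapsto(r,u)=(s+t,\,s/(s+t))$ from the quadrant $[0,\infty)^2$ onto the strip $[0,\infty)\times[0,1]$ with $\{r=0\}$ collapsed to a point, and this collapse is exactly the one producing the apex of $c(X\star Y)$, which is what makes the two spaces match. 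I would then check that $\Phi$ and its inverse respect all defining identifications, a direct verification: if $s=0$ then $u=0$ and both the cone relation $[x,0]=[x',0]$ and the join relation $(x,y,0)\sim(x',y,0)$ collapse the $x$-coordinate; symmetrically for $t=0$ and $u=1$; and $s+t=0$ gives the apex on both sides.

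The stratified compatibility is then read off the four product strata of $cX\times cY$. The pair of apexes goes to the apex of $c(X\star Y)$; the stratum $(X^{(s)}\times(0,\infty))\times\{\mathrm{apex}\}$, where $t=0$ and $u=1$, goes to $(X\star Y)^{(s)}\times(0,\infty)=X^{(s)}\times\{1\}\times(0,\infty)$; the stratum $\{\mathrm{apex}\}\times(Y^{(t)}\times(0,\infty))$, where $s=0$ and $u=0$, goes to $(X\star Y)^{(t)}\times(0,\infty)$; and the open product $(X^{(s)}\times(0,\infty))\times(Y^{(t)}\times(0,\infty))$, where $u\in(0,1)$, goes to $(X\star Y)^{(s,t)}\times(0,\infty)$. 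Each correspondence is a bijection of strata, so $\Phi$ is stratified and carries strata onto strata.

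The one delicate point, and the step I expect to be the main obstacle, is continuity at the apex: the join coordinate $u=s/(s+t)$ has no limit as $(s,t)\to(0,0)$, so continuity cannot be read off the formula alone. I would handle this by working with the quotient maps directly and using a neighborhood basis of the apex. A basic neighborhood of the apex of $c(X\star Y)$ is the image of $(X\star Y)\times[0,\varepsilon)$, and its preimage under $\Phi$ is precisely $\{([x,s],[y,t]) : s+t<\varepsilon\}$, which is open (the radial functions on each factor are continuous, hence so is their sum) and forms a neighborhood basis of the pair of apexes in $cX\times cY$. Thus $\Phi$ is continuous at the apex irrespective of the behaviour of $u$, while away from the apex, where $r=s+t>0$, both $\Phi$ and its inverse are manifestly continuous. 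Combined with the bijectivity already established, this proves $\Phi$ is a homeomorphism, hence a stratified isomorphism $cX\times cY\simeq c(X\star Y)$.
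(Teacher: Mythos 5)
Your proof is correct and follows essentially the same route as the paper's: the paper also works with the polar-type change of variables, identifying the slice $\{h+\ell=\delta\}$ of $cX\times cY$ with $X\star Y$ via $(x,y,m)\mapsto((x,m),(y,\delta-m))$ and the level of $h+\ell$ with the cone parameter. You merely write the composite map out explicitly and supply the verification of the identifications and of continuity at the apex, which the paper leaves implicit.
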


\begin{proof}
We write the elements of $cX$ in the form $(x,h)$ where $h\in [0,1[$ and $x\in X$, the
latter being meaningless when $h=0$.

The subset $\{((x,h),(y,\ell))\in cX\times cY;h+\ell<1\}$ is isomorphic to
$cX\times cY$, and is the cone over the subset
$\Delta=\{((x,h),(y,\ell))\in cX\times cY;h+\ell=\delta\}$ for any $\delta<1$.

To see that $\Delta$ is isomorphic to $X\star Y$, simply consider the map
\begin{eqnarray*}
X\times Y\times [0,\delta] &\to& \Delta \\
(x,y,m)&\mapsto& ((x,m),(y,\delta-m))
\end{eqnarray*}
(see figure \ref{fig:cones}).
\end{proof}

\begin{figure}[htp]\begin{center}
\input{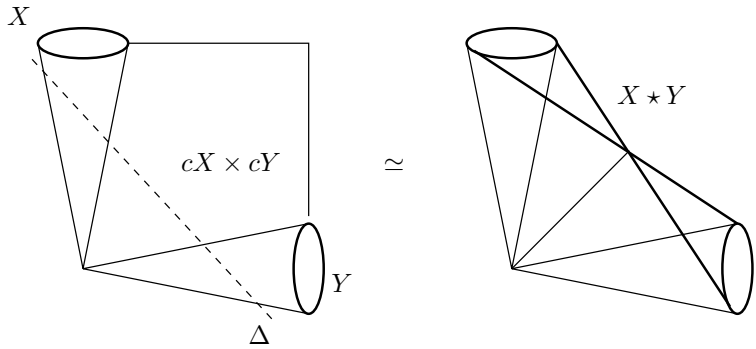}
\caption{The product of cones is a cone.}\label{fig:cones}
\end{center}\end{figure}

Since a join is locally a product (possibly involving a cone), the join of two
Siebenmann stratified spaces is again Siebenmann stratified. Then a stratightforward
induction leads to~: the product, the join and the cone of Goresky-MacPherson stratified
spaces are Goresky-MacPherson stratified. 

\subsection{Stratified bundles}

Some of the links in $\chab(\bR^n)$ shall be described as some sort of fiber bundles, where
the fiber depends upon the strata. Note that we define such bundles only in
the category of Goresky-MacPherson stratified spaces.

\begin{defi}\label{defi:bundle}
A \emph{stratified bundle} is defined inductively as a surjective continuous map 
$\pi : E\to B$ where:
\begin{itemize}
\item $E$ is a metrizable separable topological space called the \emph{total space},
\item $B$ is a Goresky-MacPherson stratified space (with stratification
      $(B^{(s)})_{s\in S}$) called the \emph{base},
\item there are topological manifolds $F_s$ called the \emph{fibers} such that 
      all $x\in B$ (in the $s$ strata, with link $L$ say) has a conical neighborhood
      $V\simeq \bR^k\times cL$ such that $\pi^{-1}(V)\simeq \bR^k\times F_s\times cL'$
      where $L'$ is a compact Goresky-MacPherson stratified space and $\pi$ writes in the form
      $$\pi(b,f,(t,l'))=(b,(t,\pi'(l'))) \quad \forall b\in \bR^k, \forall f\in F_s, \forall (t,l')\in cL'$$
      where $\pi':L'\to L$ is a stratified bundle. 
\end{itemize}
One defines similarly PL or DIFF stratified bundles in the category of PL or DIFF Goresky-MacPherson 
stratified spaces.
\end{defi}

We could have assumed $E$ to be a Goresky-MacPherson stratified space and $\pi$ to be a stratified
map, but this is not necessary.

\begin{lemm}
If $\pi :E\to B$ is a stratified bundle, the partition $E^{(s)}=\pi^{-1}(B^{(s)})$ is a Goresky-MacPherson
stratification. 
\end{lemm}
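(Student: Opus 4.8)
The plan is to argue by induction on $\dim B$, the inductive hypothesis being that for every stratified bundle whose base has dimension strictly less than $\dim B$, the preimages of strata form a Goresky-MacPherson stratification. The base case $\dim B=0$ is immediate: then $B$ is discrete, every link $L$ is empty so that $cL$ is a point, the defining link bundle $\pi':L'\to L=\varnothing$ forces $L'=\varnothing$, and the local model collapses to $\pi^{-1}(V)\simeq F_s$. Thus $E$ is a topological manifold whose strata $E^{(s)}=F_s$ are its (manifold) fibers, trivially Goresky-MacPherson stratified.

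For the inductive step I would first dispatch the elementary part of Definition \ref{defi:stratif}. Each $E^{(s)}=\pi^{-1}(B^{(s)})$ is locally closed, being the preimage under the continuous map $\pi$ of a locally closed set, and the family $(E^{(s)})_s$ is locally finite because near any $e\in E$ the conical neighborhood $\pi^{-1}(V)$ meets only the finitely many $E^{(t)}$ for which $B^{(t)}$ meets $V$. To see that the strata are manifolds I would restrict $\pi$ to a single stratum: the local models show that $\pi:E^{(s)}\to B^{(s)}$ is an honest fibre bundle with fibre $F_s$ (over $\bR^k\times\{\mbox{apex}\}$ it reads $\bR^k\times F_s\times\{\mbox{apex}\}\to\bR^k$), so $E^{(s)}$ is a topological manifold of dimension $\dim B^{(s)}+\dim F_s$, and $E$ is finite-dimensional.

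The heart of the argument is the frontier condition, for which I would establish the identity $\overline{E^{(s)}}=\pi^{-1}(\overline{B^{(s)}})$. The inclusion $\subseteq$ is formal from continuity. For $\supseteq$, take $e$ with $x:=\pi(e)\in\overline{B^{(s)}}$ lying in a stratum $B^{(t)}$; the frontier condition in $B$ gives $B^{(t)}\subseteq\overline{B^{(s)}}$, and in the local model $V\simeq\bR^k\times cL$ the stratum $B^{(s)}$ meets some cone stratum $\bR^k\times L^{(u)}\times(0,1)$ with $L^{(u)}\neq\varnothing$ whose closure contains the apex slice. Lifting through $\pi^{-1}(V)\simeq\bR^k\times F_t\times cL'$, the point $e$ sits on $\bR^k\times F_t\times\{\mbox{apex}\}$ while $E^{(s)}$ corresponds to $\bR^k\times F_t\times(\pi')^{-1}(L^{(u)})\times(0,1)$; since $\pi'$ is surjective, $(\pi')^{-1}(L^{(u)})$ is nonempty, so letting the cone coordinate tend to $0$ yields a sequence in $E^{(s)}$ converging to $e$. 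I expect precisely this nonemptiness point --- that surjectivity of the link bundle $\pi'$ is exactly what forces the apex stratum into the closure of the higher one --- to be the one genuinely delicate step. Granting the identity, the frontier condition for $E$ follows at once from that for $B$, and the strict inequality $E^{(t)}\subseteq\overline{E^{(s)}}\Rightarrow\dim E^{(t)}<\dim E^{(s)}$ drops out of the local count $\dim E^{(s)}=\dim E^{(t)}+\dim(\pi')^{-1}(L^{(u)})+1$.

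It remains to show that $E$ is locally cone-like with Goresky-MacPherson links, and here the local model does the work. Near $e\in E^{(s)}$ the isomorphism $\pi^{-1}(V)\simeq(\bR^k\times F_s)\times cL'$ carries $(\bR^k\times F_s)\times\{\mbox{apex}\}$ identically onto a neighborhood of $e$ in $E^{(s)}$, exhibiting $L'$ as a link of $e$. I would verify that this is a stratified isomorphism for the partition $(E^{(s)})_s$ by matching each $E^{(s')}\cap\pi^{-1}(V)$ with a product-cone stratum $\bR^k\times F_s\times(\pi')^{-1}(L^{(u)})\times(0,1)$; this is exactly where the inductive hypothesis enters, applied to $\pi':L'\to L$ whose base satisfies $\dim L=\dim B-k-1<\dim B$, telling us that the sets $(\pi')^{-1}(L^{(u)})$ are precisely the strata of the Goresky-MacPherson space $L'$. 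As $L'$ is compact, Goresky-MacPherson, and of dimension strictly below the local dimension of $E$, the recursive definition is met and $E$ is Goresky-MacPherson stratified. The PL and DIFF cases follow verbatim.
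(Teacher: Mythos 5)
Your proof is correct and follows essentially the same route as the paper's: verify the elementary conditions directly, obtain the frontier condition by lifting a sequence from $B^{(s)}$ through the local form of $\pi$, and read off the cone-like structure and Goresky-MacPherson links from the local model $\bR^k\times F_s\times cL'$ by induction on $\dim B$. You simply make explicit the induction and the surjectivity of $\pi'$ that the paper's one-line conclusion leaves implicit.
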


\begin{proof}
First, let us show that the frontier condition holds. Let $s$ and $t$
be indices such that $E^{(t)}\cap \overline{E}\null^{(s)}\neq \varnothing$ and
$z\in E^{(t)}$. Then $B^{(t)}\subset \overline{B}\null^{(s)}$ and there is
a sequence $x_n\in B^{(s)}$ that converges to $\pi(z)$.Thanks to the local
form of $\pi$, we can lift $x_n$ to a sequence $z_n\in E^{(s)}$ that converges
to $z$. 

Local closedness of strata and local finiteness of the partition are
direct consequences of the definition, as well as the strata being topological manifolds
and satisfying the dimension condition. The partition $(E^{(s)})_{s\in S}$ is
therefore a TOP stratification.

The local form of $\pi$ implies readily that it is also a Siebenmann stratification,
and an induction on the dimension of $B$ shows at last that it is Goresky-MacPherson.
\end{proof}

This definition of a stratified bundle is a generalization of fiber bundles over manifolds,
since the restriction of $\pi$ to $E^{(s)}\to B^{(s)}$ is a fiber bundle in the usual sense
for all $s\in S$. It is however quite restrictive, in particular the family of fibers cannot
be arbitrary: if $s\geqslant t$, then
$F_s$ must be homeomorphic to $F_t\times F'_s$ where $F'_s$ is the fiber over
$L^{(s)}$ for the bundle $\pi'$. In the stratified bundles that appear in the local study of
$\chab(\bR^n)$, the fibers are tori whose dimension depends upon the strata.

\section{Local study of $\chab(\bR^n)$}\label{sec:local}

\subsection{Stratification of $\chab(\bR^n)$}

Let us start with the simplest part of Theorem \ref{theo:stratif}.

\begin{prop}\label{prop:stratif2}
The partition $(\chab^{(p,q)})_{(p,q)\in S}$ is a DIFF stratification of $\chab$.
\end{prop}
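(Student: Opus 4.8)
The plan is to verify the three defining conditions of a \textbf{DIFF stratification} (Definition~\ref{defi:stratif} together with the CAT refinement) for the partition $(\chab^{(p,q)})_{(p,q)\in S}$: first, that each $\chab^{(p,q)}$ is a smooth manifold that is locally closed in $\chab$; second, that the partition is locally finite; and third, that the frontier condition holds together with the strict dimension drop $\dim\chab^{(t)}<\dim\chab^{(s)}$ whenever $\chab^{(t)}\subset\overline{\chab}\null^{(s)}$. Local finiteness is immediate since $S$ is a \emph{finite} set (there are only $\tfrac{(n+1)(n+2)}{2}$ admissible types $(p,q)$ with $p,q\geqslant 0$ and $p+q\leqslant n$), so every point has a neighborhood meeting finitely many strata trivially.

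For the manifold structure, the key observation is the one already recorded in Section~\ref{sec:def}: $\GL(n;\bR)$ acts on $\chab$ with orbits exactly the $\chab^{(p,q)}$. Hence each stratum is a homogeneous space $\GL(n;\bR)/H_{p,q}$, where $H_{p,q}$ is the stabilizer of a fixed model subgroup of type $(p,q)$ (say $\langle e_1,\dots,e_p\rangle + \bZ e_{p+1}+\dots+\bZ e_{p+q}$). A homogeneous space of a Lie group is a smooth manifold, and I would compute $\dim\chab^{(p,q)}$ from the stabilizer: the stabilizer consists of block-type transformations preserving the continuous part $\Gamma_0$ (a $p$-plane) and, modulo that, the discrete lattice $\Gamma_D$ in $\Gamma_0^\perp$ up to the arithmetic symmetries $\GL(q;\bZ)$. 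This gives an explicit dimension formula (of the form $\dim\GL(n)-\dim H_{p,q}$) which I would record, as it is needed for the dimension condition. Local closedness of each orbit then follows because the type is locally constant on each stratum and the strata are the $\GL(n;\bR)$-orbits of a smooth action with these prescribed stabilizers.

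The frontier condition and the dimension inequality are where the combinatorial analysis of Section~\ref{sec:combin} does the real work. I would invoke the already-established characterization that $(r,s)\geqslant(p,q)$ (i.e.\ $\chab^{(p,q)}\cap\overline{\chab}\null^{(r,s)}\neq\varnothing$, hence by the frontier condition $\chab^{(p,q)}\subset\overline{\chab}\null^{(r,s)}$) holds precisely when $r\leqslant p$ and $r+s\geqslant p+q$; this is exactly the content of the incidence diagram, justified there by the two degeneration mechanisms (a nonzero finite norm $N_i$ tending to $0$ or to $\infty$). The continuity of the norms $N_i:\chab\to[0,\infty]$, also established in Section~\ref{sec:def}, is what guarantees that limits of type $(r,s)$ subgroups land in strata with these parameter relations and no others, so the closure of a stratum is genuinely a union of strata. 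For the strict dimension drop, I would check directly from the dimension formula that passing to a strictly lower type in the sense $\geqslant$ strictly decreases the dimension, which is geometrically plausible since degeneration either collapses lattice directions into the continuous part or sends them to infinity, in each case reducing the number of free moduli.

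The main obstacle I anticipate is not any single step but the bookkeeping in the dimension computation and in rigorously extracting the frontier condition from the norm degeneration picture: one must confirm that \emph{every} boundary point of $\overline{\chab}\null^{(r,s)}$ arises as such a limit (so that no stratum is partially contained in a closure) and that the convergence in the Chabauty topology is faithfully detected by the continuity of the $N_i$ together with convergence of the decomposition data $(\Gamma_0,\Gamma_D)$. Since Proposition~\ref{prop:stratif2} is billed as ``the simplest part'' of Theorem~\ref{theo:stratif}, I expect the author to lean heavily on the $\GL(n;\bR)$-action for the manifold and local-closedness claims and on the Section~\ref{sec:combin} combinatorics for the frontier and dimension conditions, keeping the verification short.
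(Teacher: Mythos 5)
Your outline shares the paper's two main ingredients (the transitive $\GL(n;\bR)$-action on each stratum for the manifold structure, the norms $N_i$ for the analytic control), but it leaves the frontier condition genuinely unresolved. You correctly identify the danger yourself: one must rule out a stratum being only \emph{partially} contained in the closure of another, and appealing to the incidence diagram of Section~\ref{sec:combin} does not do this, since that discussion only describes which types can arise as limits, not that every point of a given type so arises. The paper closes this gap with a one-line homogeneity argument that your proposal is missing: if some $\Gamma_\infty\in\chab^{(p,q)}$ is a limit of a sequence $\Gamma_n\in\chab^{(r,s)}$, then for any other $\Gamma\in\chab^{(p,q)}$ one picks $g\in\GL(n;\bR)$ with $\Gamma=g(\Gamma_\infty)$ and observes that $g(\Gamma_n)\to\Gamma$ with $g(\Gamma_n)\in\chab^{(r,s)}$, since the action is by homeomorphisms preserving each stratum. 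This makes the frontier condition come ``for free'' from transitivity, with no need to analyze degenerations at all.

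A second, smaller gap: your justification of local closedness --- that it ``follows because the type is locally constant on each stratum and the strata are the $\GL(n;\bR)$-orbits of a smooth action'' --- is not a valid inference. Orbits of Lie group actions need not be locally closed in general (the irrational flow on a torus is the standard counterexample), and ``the type is locally constant on each stratum'' is a tautology. The paper's argument is concrete: around $\Gamma\in\chab^{(p,q)}$ take a neighborhood $U$ on which $N_{p+1},\ldots,N_{p+q}\in(0,\infty)$; inside $U$ the stratum is exactly the locus $N_1=\cdots=N_p=0$ and $N_{p+q+1}=\cdots=N_n=\infty$, which is closed in $U$ by continuity of the norms. Finally, you defer the dimension computation; the paper records it explicitly as $\dim\chab^{(p,q)}=(p+q)(n-p)$ (a point of $G(p;n)$ plus $q$ vectors in an $(n-p)$-plane), from which the strict dimension drop along $(r,s)>(p,q)$ is an immediate check. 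These are the pieces you would need to supply to make the proof complete.
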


\begin{proof}
First, $\chab$ is known to be metrizable and compact (see for example \cite{delaHarpe}).

To see that the strata are locally closed, it is sufficient to have a look at
a neighborhood $U$ of a point $\Gamma\in \chab^{(p,q)}$ that is sufficiently small
to ensure that 
$$N_{p+1},\ldots,N_{p+q}\in(0,\infty)$$
for all elements of $U$. There the strata is defined by the equations $N_1=\cdots=N_p=0$ and
$N_{p+q+1}=\cdots=N_n=\infty$, thus $U\cap \chab^{(p,q)}$ is closed in $U$.

The frontier condition, anyway simple to get from a direct study, comes for free from 
the description of strata as orbits of the action
of $\GL(n;\bR)$: if $\chab^{(p,q)}$ intersects $\overline{\chab}\null^{(r,s)}$, then there is a sequence
$\Gamma_n\in \chab^{(r,s)}$ that converges to some $\Gamma_\infty\in \chab^{(p,q)}$. For
all $\Gamma\in \chab^{(p,q)}$ there is a $g\in\GL(n;\bR)$ such that $\Gamma=g(\Gamma_\infty)$,
and the sequence $g(\Gamma_n)$ converges to $\Gamma$, hence $\chab^{(p,q)}\subset \overline{\chab}\null^{(r,s)}$.

We also get the manifold structures on strata from this action: for all $(p,q)$, the stabilizer
of the element $\bR^p\times\bZ^q=e_1\bR+\cdots+e_p\bR+e_{p+1}\bZ+\cdots+e_{p+q}\bZ$ (where $(e_i)$ is the canonical basis of
$\bR^n$) of $\chab^{(p,q)}$ is a closed subgroup $H_{(p,q)}$ of $\GL(n;\bR)$, thus a Lie subgroup.
We can endow $\chab^{(p,q)}$ with the manifold structure of $\GL(n;\bR)/H_{(p,q)}$.

Last, the dimension of $\chab^{(p,q)}$ is easily computed: the continuous part
is an element of $G(p;n)$, which has dimension $p(n-p)$, and the discrete part
is defined by the choice of $q$ vectors in a $(n-p)$ plane. We get that
$$\dim \chab^{(p,q)} = (p+q)(n-p)$$
in particular $\dim \chab^{(r,s)} > \dim \chab^{(p,q)}$ as soon as $(r,s)>(p,q)$.
\end{proof}

Note that in the sequel it will be simpler to prove only the TOP stratification of links,
so we mainly think of $\chab(\bR^n)$ as a TOP stratified space.

\subsection{Decomposition at a given scale}

Let us introduce a number of definitions to be used in the next subsection. They aim to
give a parametrization of neighborhoods in $\chab(\bR^n)$, by decomposing subgroups at three scales.
We could define more general definitions, involving more different scales but the following is
sufficient for our purpose.

\subsubsection{$\delta$-decomposability}

A \emph{scale} is a number $\delta\in(0,1)$, usually small. An element $\Gamma\in \chab(\bR^n)$
is said to be \emph{decomposable at scale $\delta$} if for all $i$, $N_i(\Gamma)\notin\{\delta,\delta^{-1}\}$.
We then say that $\Gamma$ has \emph{$\delta$-type} $(p,q)$ if
$$\left\{\begin{array}{rcl}
N_1(\Gamma),\ldots,N_p(\Gamma) &<& \delta \\
N_{p+1}(\Gamma),\ldots,N_{p+q}(\Gamma) &\in& (\delta,\delta^{-1}) \\
N_{p+q+1}(\Gamma),\ldots, N_n(\Gamma) &>& \delta^{-1}
\end{array}\right.$$
Note that the $\delta$-type of $\Gamma$ is always at most its type (with respect to the order of Section
\ref{sec:combin}, that is the order given by the frontier condition).

\subsubsection{Local trivialisation}

The motivation for this paragraph is the following. We shall associate 
to a $\delta$-decomposable element a triple of vector spaces, generated 
by three parts of $\Gamma$ (one at small scale, one at medium scale and one at large scale).
To compare close $\delta$-decomposable elements, we need to fix an identification between close
subspaces of $\bR^n$.

A \emph{linear decomposition} of type $(p,q)$ of $\bR^n$ is a triple $(V_1,V_2,V_3)$ where
$V_1$ is a $p$-plane, $V_2$ is a $q$-plane, $V_3$ is a $(n-(p+q))$-plane and
$$\bR^n=V_1\stackrel{\perp}{\oplus} V_2\stackrel{\perp}{\oplus} V_3$$
A linear decompositions of type $(p,q)$ can be naturally identified with the $(p,p+q)$ 
flag $(V_1,V_1+V_2)$. We therefore denote by $G(p,p+q;n)$ the set of all type $(p,q)$
linear decomposition. It is a manifold, and inherits a metric from the Euclidean structure
of $\bR^n$.

Given a type $(p,q)$ linear decomposition $(V_1^0,V_2^0,V_3^0)$, there is a small ball $V$ in
$G(p,p+q;n)$ centered at $(V_1^0,V_2^0,V_3^0)$ and a small ball $U$ around the identity in
a submanifold of $\SO(n)$ such that for all $(V_1,V_2,V_3)$ in $V$, there is a unique $\tau\in U$ (called
the trivialisation of $(V_1,V_2,V_3)$) such that
$\tau(V_1,V_2,V_3)=(V_1^0,V_2^0,V_3^0)$. Moreover the mapping $(V_1,V_2,V_3)\mapsto \tau$
can be chosen a diffeomorphism. From now on, we assume that for all $(V_1^0,V_2^0,V_3^0)$
we have chosen such a mapping (called a \emph{local trivialisation}).

Let $\Gamma$ be a $\delta$-decomposable element of $\delta$-type $(p,q)$. The
\emph{linear decomposition} (at scale $\delta$) of $\Gamma$ is defined as follows.
First, $V_1=\langle\Gamma(\delta)\rangle$ is the $p$-plane generated by the element
of $\gamma$ of norm less than $\delta$. We denote by $P'_2$ the orthogonal
projection on $V_1^\perp$. Then $V_2=\langle P'_2\Gamma(\delta^{-1})\rangle$ is
a $q$-plane orthogonal to $V_1$. At last, $V_3$ is defined as $(V_1+V_2)^\perp$,
and by construction $(V_1,V_2,V_3)$ is a linear decomposition.

\subsubsection{Parametrization of a neighborhood}\label{sec:parameter}

Let us define a parametrization of a neighborhood of a type $(p,q)$ element $\Gamma^0$
in $\chab(\bR^n)$. Let $V_1^0=\Gamma^0_0$ be its continuous part and $V_2^0=\langle\Gamma^0_D\rangle$
be the $q$-plane generated by its discrete part (which is orthogonal to $V_1^0$). We define
$V_3^0=(V_1^0+V_2^0)^\perp$ and we assume that a basis $(e_{p+1},\ldots,e_{p+q})$
of $\Gamma_D$ has been fixed. In what follows the dependence on this basis is not crucial.
For convenience, we also assume that we have fixed linear isomorphisms $V_0^1\simeq \bR^p$,
$\Gamma^0_D\simeq\bZ^q$ (identifying $(e_{p+1},\ldots,e_{p+q})$ with the canonical basis)
and $V_0^1\simeq \bR^{n-(p+q)}$. We may use this identifications without notice.

Choose a small scale $\delta$. The required smallness will be precised at several steps
below. First we assume that $\delta< N_{p+1}(\Gamma^0)$ and $\delta^{-1}>N_{p+q}(\Gamma^0)$,
so that $\Gamma^0$ has $\delta$-type $(p,q)$ and $(V_1^0,V_2^0,V_3^0)$ is its linear
decomposition at scale $\delta$.

Then define $U$ as the set of all $\Gamma\in\chab(\bR^n)$
such that:
\begin{itemize}
\item $\Gamma$ is $\delta$-decomposable,
\item its linear decomposition $(V_1,V_2,V_3)$ is $\delta$-close to $(V^0_1,V^0_2,V^0_3)$,
\item denoting by $\tau$ the corresponding trivialisation and by $P_2$ the orthogonal
      projection onto $V_2$, $\tau P_2(\Gamma(\delta^{-1}))\subset V_2^0$ is generated
      by vectors $v_{p+1},\ldots,v_{p+q}$ such that $|e_i-v_i|<\delta$ for all $i$,
\end{itemize}
It is an open neighborhood of $\Gamma^0$.
From a $\Gamma\in U$ we construct its \emph{local
decomposition} $(\Gamma_1,\Gamma_2,\Gamma_3,\Phi_2,\Phi_3)$ as follows.
First, $\Gamma_1=\tau\Gamma(\delta)$ is a closed subgroup of $V_1^0\simeq\bR^p$ of maximal rank
in $\chab(\bR^p)$. Second, $\Gamma_2=\tau P_2(\Gamma(\delta^{-1}))$
is a discrete subgroup of $V_2^0$ close to $\Gamma_D\simeq\bZ^q$. The \emph{distinguished
basis} of $\Gamma_2$ is the basis $v_{p+1},\ldots,v_{p+q}$ that satisfies
$|e_i-v_i|<\delta$ for all $i$ (we assume $\delta$ is small enough to ensure that
this basis is uniquely defined). This distinguished basis defines an identification between
$\Gamma_2$ and $\bZ^q$.
 Third, denoting by $P_3$ the orthogonal projection onto $V_3$,
$\Gamma_3=\tau P_3(\Gamma)$ is a discrete subgroup of $V^0_3\simeq\bR^{n-(p+q)}$.

Now $\Phi_2:\bZ^q\to V_1^0/\Gamma_1$ is the unique homomorphism such that
$\Gamma(\delta^{-1})$ is generated by the sets $\tau^{-1}(v_i+\Phi_2(e_i))$ for $p<i\leqslant p+q$.
Note that here we consider $\Phi_2(e_i)$ as a $\Gamma_1$ coset in $V_1^0$. Figure \ref{fig:coord}
illustrates this map.
Last, $\Phi_3:\Gamma_3\to (V_1^0+V_2^0)/(\Gamma_1+\Gamma_2)$ (where the range
can be identified with $\bR^p/\Gamma_1\times\bR^q/\bZ^q)$) is the unique homomorphism
such that $\Gamma$ is the union of the $\tau^{-1}(v+\Phi_3(v))$ for $v\in\Gamma_3$.
It can be useful to further decompose $\Phi_3$ into $\Phi_3':\Gamma_3\to V_1^0/\Gamma_1$
and $\Phi_3'':\Gamma_3\to V_2^0/\Gamma_2\simeq\bR^q/\bZ^q$.

\begin{figure}[htp]\begin{center}
\input{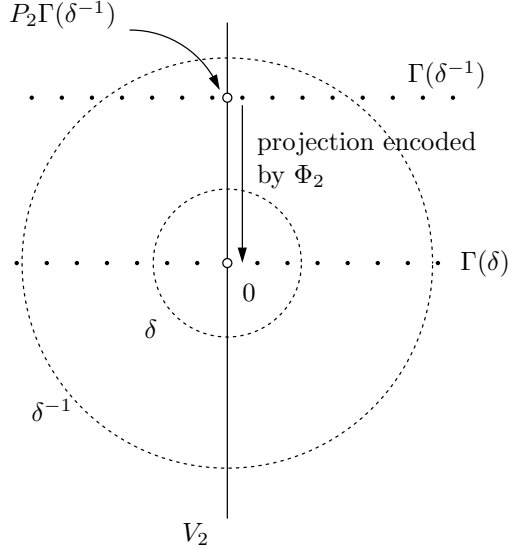}
\caption{The map $\Phi_2$ enables the recovering of $\Gamma(\delta^{-1})$ from $\Gamma_1$,
$\Gamma_2$ and the local trivialisation $\tau$ (in this example, $\Gamma$ has $\delta$-type $(1,1)$).}\label{fig:coord}
\end{center}\end{figure}

From the linear and local decompositions of $\Gamma$, it is easy to reconstruct
$\Gamma$. The map 
$$\Gamma\mapsto\big((V_1,V_2,V_3),(\Gamma_1,\Gamma_2,\Gamma_3,\Phi_2,\Phi_3)\big)$$
and its inverse are moreover continuous.

It will be simpler in the sequel to use the part of
$U$ defined by 
$$N_p(\Gamma_1)+N_{p+q+1}(\Gamma_3)^{-1}<\delta$$
(note that
$N_{p+q+1}(\Gamma_3)$ is the first norm of $\Gamma_3$ viewed as an element
of $\chab(\bR^{n-(p+q)})$). This neighborhood is denoted by $U_\delta^n(\Gamma^0)$.

\subsection{Neighborhoods in $\chab(\bR^n)$}

Now are now ready to prove Theorem \ref{theo:stratif}.

\begin{lemm}\label{lemm:Ln0}
The trivial subgroup $0\in \chab(\bR^n)$ has a neighborhood of the form
$cL^n(0)$ where the link $L^n(0)$ is the set of subgroups of unit (first) norm,
stratified by its intersection with the strata of $\chab(\bR^n)$.
\end{lemm}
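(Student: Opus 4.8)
The plan is to realise the claimed cone structure explicitly through the homothety action of $(0,\infty)$ on $\chab(\bR^n)$, i.e. the restriction of the $\GL(n;\bR)$-action to scalar matrices. The first thing I would record is that the trivial subgroup has type $(0,0)$, so that $N_1(0)=\cdots=N_n(0)=\infty$, and that the norms transform homogeneously under scaling: for $\lambda\in(0,\infty)$ and $\Gamma\in\chab(\bR^n)$ one has $N_i(\lambda\Gamma)=\lambda N_i(\Gamma)$ for all $i$. This is the only computation in the argument, and it is entirely routine: since $(\lambda\Gamma)\cap\overline{B}(0,r)=\lambda\big(\Gamma\cap\overline{B}(0,r/\lambda)\big)$, one gets $(\lambda\Gamma)(r)=\lambda\,\Gamma(r/\lambda)$, the continuous part is unchanged, and each critical radius $r_j$ gets multiplied by $\lambda$.

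Next I would set $U=N_1^{-1}\big((1,\infty]\big)$ and $L^n(0)=N_1^{-1}(1)$. Because $N_1\colon\chab(\bR^n)\to[0,\infty]$ is continuous, $U$ is an open neighborhood of $0$ (recall $N_1(0)=\infty$) and $L^n(0)$ is closed, hence compact since $\chab(\bR^n)$ is compact; this compactness is exactly what makes $cL^n(0)$ a stratified cone in the sense of Section~\ref{sec:strat}, with $L^n(0)$ carrying the partition induced by the strata of $\chab(\bR^n)$. A nonzero $\Gamma\in U$ has rank at least one, so $N_1(\Gamma)\in(1,\infty)$, and $\tfrac1{N_1(\Gamma)}\Gamma$ has first norm exactly $1$, i.e. lies in $L^n(0)$. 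I would then define
\[
\Psi\colon U\longrightarrow cL^n(0),\qquad
\Psi(\Gamma)=\Big(\tfrac1{N_1(\Gamma)}\,\Gamma,\ \tfrac1{N_1(\Gamma)}\Big)\ \ (\Gamma\neq 0),\qquad \Psi(0)=\mathrm{apex},
\]
with inverse $(\ell,t)\mapsto \tfrac1t\,\ell$ and $\mathrm{apex}\mapsto 0$. The scaling identity gives $N_1(\tfrac1t\ell)=1/t>1$, so $\Psi^{-1}$ indeed lands in $U$ and $\Psi$ is a bijection.

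It then remains to check that $\Psi$ is a stratified isomorphism. Continuity of $\Psi$ and of $\Psi^{-1}$ away from the apex follows from the continuity of $N_1$ together with the continuity of the scalar action $(\lambda,\Gamma)\mapsto\lambda\Gamma$ on $\chab(\bR^n)$. The one delicate point—and the step I expect to be the main obstacle—is the behaviour at the apex, where I would use the compactness of $L^n(0)$: a basic neighborhood of the apex in $cL^n(0)$ is $\{t<\varepsilon\}$, whose $\Psi$-preimage is precisely $N_1^{-1}\big((1/\varepsilon,\infty]\big)$, an open neighborhood of $0$; conversely $\Gamma\to 0$ forces $N_1(\Gamma)\to\infty$, hence $t\to 0$. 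This matches the two topologies at the apex and shows $\Psi$ is a homeomorphism. Finally, since homotheties preserve the type $(p,q)$, the map $\Psi$ sends $U\cap\chab^{(p,q)}$ onto $L^n(0)^{(p,q)}\times(0,1)$ for $(p,q)\neq(0,0)$ and the single point $\{0\}=U\cap\chab^{(0,0)}$ onto the apex; thus the inverse image of every stratum of $cL^n(0)$ is a stratum of $U$, and $\Psi$ is the desired stratified isomorphism, with $L^n(0)$ as the link of $0$.
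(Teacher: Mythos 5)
Your proposal is correct and is essentially the paper's own argument: the paper also takes $L^n(0)=N_1^{-1}(1)$ and uses the map $(t,\Gamma)\mapsto t^{-1}\Gamma$ from $[0,1)\times L^n(0)$ onto the neighborhood $\{N_1>1\}$, which is exactly the inverse of your $\Psi$. You simply spell out the norm-scaling identity and the continuity check at the apex, which the paper leaves implicit.
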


\begin{proof}
The neighborhood $U_1^n(0)$ defined above is the set of elements
of norm greater than $1$.
The map
\begin{eqnarray*}
[0,1) \times L^n(0) &\to& U_1^n(0)\\
(t,\Gamma)&\mapsto&  t^{-1}\Gamma
\end{eqnarray*}
(with the convention $\infty\Gamma=\Gamma_0$, here $\Gamma_0=0$)
is continuous and induces a homeomorphism $cL^n(0)\to U_1^n(0)$.

Since $U_1^n(0)\smallsetminus \{0\}\simeq (0,1)\times L^n(0)$ is open, it inherits a stratification
from that of $\chab(\bR^n)$. It follows that the intersections of $L^n(0)$
with the strata $\chab^{(p,q)}$ does define a stratification. The above homeomorphism
becomes a stratified isomorphism when $L^n(0)$ is given this stratification.
\end{proof}
Note that we could do the same with any fixed value for the first norm instead of $1$.

The local study of the total group follows immediately from that of $0$.
\begin{lemm}\label{lemm:LnRn}
The total group $\bR^n\in \chab(\bR^n)$ has a neighborhood 
$cL^n(\bR^n)$ where $L^n(\bR^n)\simeq L^n(0)$ is the set of subgroups
of $n$-th norm $1$.
\end{lemm}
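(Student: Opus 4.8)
The plan is to mirror the proof of Lemma \ref{lemm:Ln0} at the opposite end of the diagram of Section \ref{sec:combin}, and then to relate the two links through the duality $\ast$ of Section \ref{sec:duality}. Recall that $\ast$ is an involutive homeomorphism with $0^\ast=\bR^n$ and $(\bR^n)^\ast=0$, that it is homothety-anti-equivariant (i.e. $(t\Gamma)^\ast=t^{-1}\Gamma^\ast$ for $t>0$), and that it sends a type $(0,q)$ element to a type $(n-q,q)$ element. Since the elements appearing near $0$ are exactly the discrete (type $(0,q)$) subgroups while those appearing near $\bR^n$ are the maximal rank ones, $\ast$ transports one family onto the other.

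First I would set $L^n(\bR^n):=\ensemble{\Gamma\in\chab}{N_n(\Gamma)=1}$ and build a cone neighborhood directly, exactly as in Lemma \ref{lemm:Ln0}. The condition $N_n(\Gamma)<\infty$ forces maximal rank, so for $\Gamma\in L^n(\bR^n)$ and $t\in[0,1)$ one has $N_n(t\Gamma)=t<1$; with the convention $0\cdot\Gamma=\langle\Gamma\rangle=\bR^n$, the map
\begin{eqnarray*}
[0,1)\times L^n(\bR^n) &\to& \{N_n<1\}\\
(t,\Gamma)&\mapsto& t\Gamma
\end{eqnarray*}
is continuous and induces a homeomorphism $cL^n(\bR^n)\to\{N_n<1\}$. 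The set $\{N_n<1\}$ is open (continuity of $N_n$) and contains $\bR^n$ (all of whose norms vanish), so it is a neighborhood — in fact it is the neighborhood $U_1^n(\bR^n)$ of Section \ref{sec:parameter} — and the verification that the homeomorphism is a stratified isomorphism for the induced stratification of $L^n(\bR^n)$ is word for word that of Lemma \ref{lemm:Ln0}. This already yields the cone neighborhood $cL^n(\bR^n)$ together with the announced description of its link.

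It then remains to prove $L^n(\bR^n)\simeq L^n(0)$. The tempting shortcut — applying $\ast$ to the neighborhood $cL^n(0)$ and reading off the link — does not work on the nose, and this is the point that requires care: $\ast$ does \emph{not} preserve the unit-norm conditions pointwise, since for genuine lattices the product $N_1(\Gamma)\,N_n(\Gamma^\ast)$ obeys Minkowski-type transference inequalities but is not identically $1$, so $\ast(L^n(0))$ is not literally $L^n(\bR^n)$. The fix is to post-compose $\ast$ with the homothety renormalizing the last norm, i.e. to consider
$$\Psi:\ L^n(0)\longrightarrow L^n(\bR^n),\qquad \Psi(\Gamma)=N_n(\Gamma^\ast)^{-1}\,\Gamma^\ast .$$
For $\Gamma\in L^n(0)$ we have $N_1(\Gamma)=1$, so $\Gamma$ is discrete and nontrivial of some type $(0,q)$ with $q\geqslant 1$; its dual $\Gamma^\ast$ has type $(n-q,q)$ and maximal rank, whence $0<N_n(\Gamma^\ast)<\infty$ and $\Psi$ is well defined and continuous. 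By construction $N_n(\Psi(\Gamma))=1$, and the analogous map $\Lambda\mapsto N_1(\Lambda^\ast)^{-1}\Lambda^\ast$ is its inverse (using $\ast^2=\mathrm{id}$, anti-equivariance, and the homogeneity $N_i(t\Delta)=tN_i(\Delta)$). As $\ast$ is a stratified isomorphism and homotheties preserve types, $\Psi$ matches the stratum $L^n(0)\cap\chab^{(0,q)}$ with $L^n(\bR^n)\cap\chab^{(n-q,q)}$ and is a stratified isomorphism.

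The only genuine obstacle is thus the transference subtlety just isolated. Conceptually, $\{N_1=1\}$ and $\{N_n=1\}$ are cross-sections of the homothety action on the discrete, respectively maximal rank, subgroups, and $\ast$ conjugates these two actions while reversing their direction; the map $\Psi$ is precisely the induced homeomorphism of cross-sections. Everything else is a routine transcription of Lemma \ref{lemm:Ln0}.
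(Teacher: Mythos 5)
Your proof is correct, and the cone construction is word for word the paper's: the paper likewise takes $U_1^n(\bR^n)$ to be the set of elements of $n$-th norm at most $1$ and uses $(t,\Gamma)\mapsto t\Gamma$ with the convention $0\cdot\Gamma=\langle\Gamma\rangle=\bR^n$ to get the isomorphism $cL^n(\bR^n)\to U_1^n(\bR^n)$. Where you differ is on the identification $L^n(\bR^n)\simeq L^n(0)$: the paper disposes of it in one sentence, observing that $\ast$ is a stratified isomorphism sending $0$ to $\bR^n$ and hence sending $U_1^n(0)$ onto \emph{a} cone neighborhood of $\bR^n$, and it leaves the actual matching of the two links implicit. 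You correctly isolate why this is not immediate --- $\ast$ does not carry $\{N_1=1\}$ onto $\{N_n=1\}$ pointwise, by the transference phenomenon you mention --- and you repair it with the renormalized duality $\Psi(\Gamma)=N_n(\Gamma^\ast)^{-1}\Gamma^\ast$, checking well-definedness ($\Gamma^\ast$ has maximal rank so $0<N_n(\Gamma^\ast)<\infty$), continuity, the explicit inverse, and compatibility with the strata $(0,q)\mapsto(n-q,q)$. This is a genuine gain in rigor at essentially no cost; equivalently, as you note, both $\{N_1=1\}$ (on the discrete side) and $\{N_n=1\}$ (on the maximal-rank side) are global cross-sections of the homothety flow, and $\ast$ conjugates the two flows, so the induced map of cross-sections is exactly your $\Psi$. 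Both routes yield the statement; yours makes explicit a step the paper elides.
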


\begin{proof}
The duality map $*$ is a stratified isomorphism and maps $0$ to $\bR^n$.
It must therefore map $U_1^n(0)$ onto a neighborhood of $\bR^n$.
We can also reproduce the proof of Lemma \ref{lemm:Ln0}: the neighborhood
$U_1^n(\bR^n)$ is the set of elements of $n$-th norm at most $1$
and the map
\begin{eqnarray*}
[0,1) \times L^n(\bR^n) &\to& U_1^n(\bR^n)\\
(t,\Gamma)&\mapsto&  t\Gamma
\end{eqnarray*}
(with the convention $0\Gamma=\langle \Gamma\rangle$, here $\langle \Gamma\rangle=\bR^n$) 
induces an isomorphism $cL^n(\bR^n)\to U_1^n(\bR^n)$.
\end{proof}

\begin{lemm}
Any type $(p,q)$ element $\Gamma^0\in \chab(\bR^n)$ 
has a neighborhood
of the form $\bR^{(n-p)(p+q)}\times cL$
where the link $L=L^n(\Gamma^0)$
is defined in $U_\delta^n(\Gamma^0)$ (where $\delta$ is any small
enough scale) by the equations
\begin{eqnarray*}
(V_1,V_2,V_3)&=&(V_1^0,V_2^0,V_3^0) \\
\Gamma_2&=&\Gamma^0_D\\
N_p(\Gamma_1)+N_{p+q+1}(\Gamma_3)^{-1}&=&\delta/2
\end{eqnarray*}
where we use the notations of Section \ref{sec:parameter}.
\end{lemm}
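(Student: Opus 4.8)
The plan is to read off the asserted product–cone structure directly from the parametrization of $U_\delta^n(\Gamma^0)$ built in Section \ref{sec:parameter}, and then to exhibit the cone by means of an anisotropic rescaling in $\GL(n;\bR)$ that plays, for a general stratum, the role that $t\mapsto t^{\pm1}\Gamma$ played in the proofs of Lemmas \ref{lemm:Ln0} and \ref{lemm:LnRn}.

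First I would use the homeomorphism
$$\Gamma\longmapsto\big((V_1,V_2,V_3),(\Gamma_1,\Gamma_2,\Gamma_3,\Phi_2,\Phi_3)\big)$$
of Section \ref{sec:parameter} to identify $U_\delta^n(\Gamma^0)$ with the space of such tuples subject to $t:=N_p(\Gamma_1)+N_{p+q+1}(\Gamma_3)^{-1}<\delta$, and split the coordinates into the \emph{stratum coordinates} $\big((V_1,V_2,V_3),\Gamma_2\big)$ and the \emph{transverse coordinates} $(\Gamma_1,\Gamma_3,\Phi_2,\Phi_3)$. The stratum coordinates range over a small open ball in $G(p,p+q;n)$, of dimension $p(n-p)+q(n-p-q)$, times a neighborhood of $\bZ^q$ in the lattices of $V_2^0\simeq\bR^q$, of dimension $q^2$; their product is an open ball of dimension $p(n-p)+q(n-p-q)+q^2=(p+q)(n-p)$, hence homeomorphic to $\bR^{(n-p)(p+q)}=\bR^{\dim\chab^{(p,q)}}$. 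The only coupling between the two groups of coordinates is that the target $(V_1^0+V_2^0)/(\Gamma_1+\Gamma_2)$ of $\Phi_3$ involves $\Gamma_2$; this is removed once and for all by identifying $\bR^q/\Gamma_2$ with the fixed torus $\bR^q/\bZ^q$ through the distinguished basis of $\Gamma_2$, an identification depending continuously on $\Gamma_2$. After this adjustment the transverse coordinates are independent of the stratum coordinates (recall that, after applying the trivialisation $\tau$, the data $\Gamma_1,\Gamma_3$ live in the fixed spaces $V_1^0,V_3^0$), so the parametrization gives a homeomorphism $U_\delta^n(\Gamma^0)\simeq\bR^{(n-p)(p+q)}\times W$, where $W$ is the set of transverse coordinates satisfying $t<\delta$.

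The heart of the proof is to identify $W$ with a cone. For $s>0$ let $A_s\in\GL(n;\bR)$ be the linear map acting as multiplication by $s$ on $V_1^0$, as the identity on $V_2^0$ and as multiplication by $s^{-1}$ on $V_3^0$; it acts on $\chab(\bR^n)$. By construction $A_s$ fixes the stratum coordinates, sends $\Gamma_1$ to $s\Gamma_1$ and $\Gamma_3$ to $s^{-1}\Gamma_3$, leaves the middle norms untouched, and hence multiplies $t$ by $s$. On the level set $L:=W\cap\{t=\delta/2\}$ one has $N_p(\Gamma_1)\leqslant\delta/2$ and $N_{p+q+1}(\Gamma_3)\geqslant 2/\delta$, so for $s\in(0,2)$ the images $A_s\ell$ still satisfy $N_p<\delta$, $N_{p+q+1}>\delta^{-1}$ and the middle norms stay in $(\delta,\delta^{-1})$; thus $A_s$ preserves $W$ and sweeps out $W\smallsetminus\{\Gamma^0\}$. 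The action is free on $\{t>0\}$ (distinct factors change $N_p$ or $N_{p+q+1}$), so every point with $0<t<\delta$ is $A_s\ell$ for a unique $\ell\in L$ and a unique $s=2t/\delta\in(0,2)$. The map $(\ell,s)\mapsto A_s\ell$, with $A_0\ell:=\Gamma^0$, therefore descends to a continuous bijection $cL\to W$; continuity at the apex holds because $t(A_s\ell)=s\delta/2\to0$ as $s\to0$ forces $\Gamma_1\to\bR^p$ and $\Gamma_3\to 0$, i.e.\ $A_s\ell\to\Gamma^0$ uniformly on the compact $L$, while continuity of the inverse at the apex is automatic from the cone topology once $t\to0$ drives the cone parameter to $0$. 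This is the anisotropic analogue of the scalings of Lemmas \ref{lemm:Ln0} and \ref{lemm:LnRn}, recovered here for $(p,q)=(0,0)$ and $(p,q)=(n,0)$, where $\bR^{(n-p)(p+q)}$ reduces to a point.

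It remains to check that $L$ is compact and that the homeomorphism is stratified. Compactness follows from $\chab(\bR^n)$ being compact and $L$ being cut out by closed conditions: $N_p(\Gamma_1)\leqslant\delta/2$ keeps $\Gamma_1$ of maximal rank and confines it to a compact subset of $\chab(\bR^p)$, $N_{p+q+1}(\Gamma_3)\geqslant 2/\delta$ confines $\Gamma_3$ to a compact set of discrete subgroups of $\bR^{n-(p+q)}$, and $\Phi_2,\Phi_3$ then range over compact tori fibred over these. The apex $\Gamma^0$ corresponds to $\Gamma_1=\bR^p$, $\Gamma_3=0$, so $\bR^{(n-p)(p+q)}\times\{\mathrm{apex}\}$ maps onto $U_\delta^n(\Gamma^0)\cap\chab^{(p,q)}$, as the locally cone-like structure requires. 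Finally, since the type of the reconstructed subgroup is read continuously off the discrete structure of $\Gamma_1$ and $\Gamma_3$, the parametrization is a stratified isomorphism; endowing $L$ with the stratification induced by the $\chab^{(p',q')}$ turns $cL$ into a stratified cone and the whole map into an isomorphism of stratified spaces. \textbf{The main obstacle} is precisely this last bundle of verifications — that the rescaling $A_s$ yields a genuine, compact-based cone and that both the product splitting and the cone map respect the stratification — rather than the essentially formal product decomposition.
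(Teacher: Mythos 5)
Your proof is correct and follows essentially the same route as the paper: the same splitting of $U_\delta^n(\Gamma^0)$ into the stratum coordinates $(V_1,V_2,V_3,\Gamma_2)$ (a $(n-p)(p+q)$-ball) and the transverse coordinates $(\Gamma_1,\Gamma_3,\Phi_2,\Phi_3)$, followed by a radial rescaling of the transverse part. Your anisotropic map $A_s$ is exactly the paper's cone parametrization $(t,\Gamma_1,\Gamma_3,\Phi_2,\Phi_3)\mapsto(t\Gamma_1,t^{-1}\Gamma_3,t\Phi_2,\Phi_3^t)$ written as a linear action on $\bR^n$, and you merely spell out some details (the decoupling of the target of $\Phi_3$ from $\Gamma_2$, compactness of $L$, continuity at the apex) that the paper leaves implicit.
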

Implicitely, $L$ is stratified by its intersection with the strata of $\chab(\bR^n)$.

\begin{proof}
By homogeneity of strata we can restrict to $\Gamma^0=\bR^p\times \bZ^q$. 
Consider its neighborhood $U_\delta^n(\bR^p\times\bZ^q)$:
an element $\Gamma$ there has a linear decomposition at scale $\delta$
$(V_1,V_2,V_3)$ and a local decomposition 
$(\Gamma_1,\Gamma_2,\Gamma_3,\Phi_2,\Phi_3)$.
Its projection to $\chab^{(p,q)}$ is defined as
$V_1+\tau^{-1}(\Gamma_2)\subset V_1+V_2$. It can be arbitrary
in a neighborhood of $\bR^p\times \bZ^q$ in $\chab^{(p,q)}$.
As a consequence, $U_\delta^n(\bR^p\times\bZ^q)$ is
isomorphic the product of two sets, the set
of possible choice of $(V_1,V_2,V_3,\Gamma_2)$,
which is a $(n-p)(p+q)$-dimensional ball, and the set $M$
of possible choices of $(\Gamma_1,\Gamma_3,\Phi_2,\Phi_3)$.
This last set is of course stratified by the type $(r,s)$ of the corresponding
point $\Gamma$. This type depends only upon $(\Gamma_1,\Gamma_3)$.

Apart from the choice of $\Phi_2$ and $\Phi_3$, $M$ looks like
the product of two cones $cL^p(\bR^p)$ and $cL^{n-(p+q)}(0)$, and we proceed
as in the proof of Lemma \ref{lemm:pdt_cones}. The map
\begin{eqnarray*}
[0,2) \times L^n(\bR^p\times \bZ^q) &\to& M\\
(t,\Gamma_1,\Gamma_3,\Phi_2,\Phi_3)&\mapsto&  (t\Gamma_1,t^{-1}\Gamma_3,t\Phi_2,\Phi_3^t)
\end{eqnarray*}
(where $\Phi_3^t(t^{-1}\gamma):=(t\Phi_3'(\gamma),\Phi_3''(\gamma))$ for all $\gamma\in\Gamma_3$)
induces the required isomorphism $cL^n(\bR^p\times \bZ^q) \to M$.
\end{proof}
Note that Lemmas \ref{lemm:Ln0}, \ref{lemm:LnRn} are included in this result.
We now can tell that $\chab(\bR^n)$ is Siebenmann stratified, but we can get more.

\begin{lemm}\label{lemm:core}
Let $\Gamma_0\in \chab(\bR^n)$ and consider an element $\Gamma\in\chab(\bR^n)$
that lies on the link $L^n(\Gamma^0)$. For small enough $\delta$, the neighborhood
$U_\delta^n(\Gamma)\cap L^n(\Gamma^0)$ of $\Gamma$ in $L^n(\Gamma^0)$ is of the form
$\bR^k\times cL^n(\Gamma)$ (where $k$ depends on the types of $\Gamma^0$ and $\Gamma$).
\end{lemm}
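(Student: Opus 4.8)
The plan is to pull the question back to a neighborhood of $\Gamma$ itself and to locate $L^n(\Gamma^0)$ inside the product structure already known there. By homogeneity of the stratum $\chab^{(p,q)}$ (as in the preceding lemmas) I may assume $\Gamma^0=\bR^p\times\bZ^q$, so that $L^n(\Gamma^0)$ is cut out in $U_{\delta_0}^n(\Gamma^0)$ by the three conditions $(V_1,V_2,V_3)=(V_1^0,V_2^0,V_3^0)$, $\Gamma_2=\Gamma^0_D$ and $N_p(\Gamma_1)+N_{p+q+1}(\Gamma_3)^{-1}=\delta_0/2$, where $\delta_0$ denotes the now fixed scale defining the link. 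Let $(r,s)$ be the type of $\Gamma$. Choosing a much finer scale $\delta\ll\delta_0$ (small enough that the $\delta$-type of $\Gamma$ equals its true type $(r,s)$), the preceding lemma gives a stratified isomorphism $U_\delta^n(\Gamma)\simeq \bR^{(n-r)(r+s)}\times cL^n(\Gamma)$, in which the Euclidean factor records the scale-$\delta$ linear decomposition and medium lattice $(V_1',V_2',V_3',\Gamma_2')$ of a nearby subgroup $\Gamma'$, while the cone factor records the remaining local data $(\Gamma_1',\Gamma_3',\Phi_2',\Phi_3')$. It then suffices to show that, read in these coordinates, the three conditions above involve only the Euclidean factor (possibly after eliminating one of its coordinates against the radial coordinate of the cone), so that their common solution set is $\bR^k\times cL^n(\Gamma)$.

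First I would check that the two conditions $(V_1,V_2,V_3)=(V_1^0,V_2^0,V_3^0)$ and $\Gamma_2=\Gamma^0_D$ are functions of the Euclidean factor alone. The coarse data are recovered from the fine data by $V_1^0=\langle\Gamma'(\delta_0)\rangle$ and $V_1^0+V_2^0=\langle\Gamma'(\delta_0^{-1})\rangle$; since $\delta<\delta_0$, the vectors of $\Gamma_3'$ have norm larger than $\delta^{-1}>\delta_0^{-1}$ and do not contribute, while the twist $\Phi_2'$ only moves the medium lattice vectors inside their $V_1'$-cosets and hence leaves these spans unchanged. Thus $V_1^0,V_2^0,V_3^0$, and likewise $\Gamma_2=\Gamma^0_D$ (obtained from $\Gamma'(\delta_0^{-1})$ by the orthogonal projection onto $V_2^0$, which kills the $V_1^0$-components carrying $\Phi_2'$), depend only on $(V_1',V_2',V_3',\Gamma_2')$. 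These conditions therefore cut out a submanifold of the Euclidean factor, locally a linear subspace $\bR^{k'}$, and the cone factor $cL^n(\Gamma)$ is left untouched.

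The heart of the argument is the radial condition $N_p(\Gamma_1)+N_{p+q+1}(\Gamma_3)^{-1}=\delta_0/2$. Since $\Gamma$ has $\delta_0$-type $(p,q)$ one has $N_p(\Gamma)<\delta_0$ and $N_{p+q+1}(\Gamma)>\delta_0^{-1}$, and as $\delta$ was chosen finer than every nonzero finite norm gap of $\Gamma$, whenever $N_p(\Gamma)$ is positive, resp. $N_{p+q+1}(\Gamma)$ is finite, it lies in the medium range $(\delta,\delta^{-1})$ and is thus a coordinate on the medium lattice $\Gamma_2'$, i.e.\ on the Euclidean factor. In the generic situation both are Euclidean and the condition merely removes one more Euclidean dimension. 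The only degenerate possibilities are $N_p(\Gamma)=0$ (the continuous part already having dimension $p$, so that $N_p=N_r$ is the radial coordinate of the $\Gamma_1$-part of the cone) and $N_{p+q+1}(\Gamma)=\infty$ (so that $N_{p+q+1}^{-1}=N_{r+s+1}^{-1}$ is the radial coordinate of the $\Gamma_3$-part of the cone); these cannot occur simultaneously, for that would force $\Gamma=\Gamma^0$. In each degenerate case the condition expresses the genuine Euclidean coordinate among the two (the one still finite and positive at $\Gamma$) as a function of the cone radial coordinate, exactly as in the slicing $\{h+\ell=\delta\}$ used in the proof of Lemma~\ref{lemm:pdt_cones}; by the implicit function theorem one eliminates that Euclidean coordinate, again lowering the Euclidean dimension by one while preserving the cone. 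In all cases the common solution set is $\bR^k\times cL^n(\Gamma)$ with $k=k'-1$, and the $L^n(\Gamma)$ so produced is literally the link of $\Gamma$ in $\chab(\bR^n)$. The delicate points, and the place where care is genuinely needed, are the bookkeeping of the two scales $\delta_0$ and $\delta$ — making sure each coarse quantity is read off from the correct fine coordinate — and the shear identification in the two degenerate cases, which is precisely what guarantees that the radial condition never destroys the cone direction but at worst trades it for a Euclidean one.
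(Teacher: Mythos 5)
Your argument is correct and follows essentially the same route as the paper's (much terser) proof: both reduce to the product neighborhood $U_\delta^n(\Gamma)\simeq\bR^{(n-r)(r+s)}\times cL^n(\Gamma)$ furnished by the preceding lemma and observe that the equations cutting out $L^n(\Gamma^0)$ constrain only the Euclidean factor, up to solving the radial equation for one coordinate. Your version merely supplies the two-scale bookkeeping and the degenerate-case analysis that the paper leaves implicit in its two-line sketch.
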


\begin{proof}
This follows directly from previous lemma. Let $(p,q)$ and $(r,s)$ be the types of
$\Gamma^0$ and $\Gamma$. Up to a change of scale
$cL^n(\Gamma)\subset L^n(\Gamma^0)$ and $L^n(\Gamma^0)$ intersects
the $(r,s)$ strata of $U_\delta^n(\Gamma)$ along a submanifold of $\bR^{(n-r)(r+s)}$. 
\end{proof}

The following last lemma settles the proof of Theorem \ref{theo:stratif} and shows
how the general links are related to the $L^k(0)$.
\begin{lemm}\label{lemm:bundle}
For all $(p,q)$, the link $L=L^n(\bR^p\times\bZ^q)$ is a Goresky-MacPherson
stratified space.

Moreover, if $(p,q)$ is different from $(0,0)$ and $(n,0)$, then the map
\begin{eqnarray*}
\pi=\pi(n,p,q): L=L^n(\bR^p\times\bZ^q) &\to& L^p(\bR^p)\star L^{n-(p+q)}(0)\\
(\Gamma_1,\Gamma_3,\Phi_2,\Phi_3) &\mapsto& 
  \left(N_p(\Gamma_1)^{-1}\cdot\Gamma_1;N_{p+q+1}(\Gamma_3)\cdot\Gamma_3;\frac2\delta N_p(\Gamma_1) \right)
\end{eqnarray*}
is a stratified bundle
and for all $(r,s)>(p,q)$
the fiber over the stratum $L^{(r,s)}$ is a torus of dimension
$$q(p-r)+(r+s-p-q)(p+q-r)$$
\end{lemm}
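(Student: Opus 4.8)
The plan is to read the bundle structure directly off the parametrization of Section~\ref{sec:parameter}. A point of $L=L^n(\bR^p\times\bZ^q)$ is a tuple $(\Gamma_1,\Gamma_3,\Phi_2,\Phi_3)$ constrained by $N_p(\Gamma_1)+N_{p+q+1}(\Gamma_3)^{-1}=\delta/2$, and $\pi$ merely rescales $\Gamma_1$ and $\Gamma_3$ to unit norm, stores the scale in the join parameter $t=\frac2\delta N_p(\Gamma_1)\in[0,1]$, and discards $\Phi_2,\Phi_3$. First I would check that $\pi$ is well defined, continuous and onto: the constraint forces $N_p(\Gamma_1)\in[0,\delta/2]$ and $N_{p+q+1}(\Gamma_3)\in[2/\delta,\infty]$, so that $\bar\Gamma_1:=N_p(\Gamma_1)^{-1}\Gamma_1\in L^p(\bR^p)$ has unit $p$-th norm and $\bar\Gamma_3:=N_{p+q+1}(\Gamma_3)^{-1}\Gamma_3\in L^{n-(p+q)}(0)$ has unit first norm. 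Moreover $t=0$ forces $\Gamma_1=\bR^p$ and $t=1$ forces $\Gamma_3=0$, so at each end one coordinate degenerates and is killed by the join relation; hence the target is exactly $L^p(\bR^p)\star L^{n-(p+q)}(0)$.

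Next I would carry out the type bookkeeping that shows $\pi$ is stratified and pins down the fibers. A point of type $(r,s)$ with $(r,s)>(p,q)$ has continuous part of dimension $r$ carried by $\Gamma_1$, which is of maximal rank $p$ in $\bR^p$, hence of type $(r,p-r)$; and the splitting $\langle\Gamma\rangle=V_1\oplus V_2\oplus\langle\Gamma_3\rangle$ forces $\Gamma_3$ to be discrete of rank $r+s-p-q$, i.e. of type $(0,r+s-p-q)$. Thus $\pi$ carries $L^{(r,s)}$ onto the join stratum indexed by $\bigl((r,p-r),(0,r+s-p-q)\bigr)$ (one factor collapsing when $r=p$ or $r+s=p+q$), and this is an order isomorphism onto the strata of the base, so the preimage of a stratum is a stratum. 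With $\bar\Gamma_1,\bar\Gamma_3,t$ fixed the only remaining freedom is the pair $(\Phi_2,\Phi_3)$. Since $V_1^0/\Gamma_1\simeq T^{p-r}$ and $(V_1^0+V_2^0)/(\Gamma_1+\Gamma_2)\simeq T^{(p-r)+q}$, the homomorphism $\Phi_2\in\mathrm{Hom}(\bZ^q,T^{p-r})$ ranges over $T^{q(p-r)}$ and $\Phi_3\in\mathrm{Hom}(\bZ^{\,r+s-p-q},T^{(p-r)+q})$ ranges over $T^{(p+q-r)(r+s-p-q)}$; the fiber over $L^{(r,s)}$ is the product of these two tori, of dimension $q(p-r)+(r+s-p-q)(p+q-r)$, which is the claimed value.

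For the bundle property I would argue by induction on $n$, run simultaneously with Theorem~\ref{theo:stratif}. For $n'<n$ the spaces $\chab(\bR^{n'})$ and all their links are already Goresky-MacPherson; since $p<n$ and $n-(p+q)<n$ (here we use $(p,q)\neq(0,0),(n,0)$), the factors $L^p(\bR^p)$ and $L^{n-(p+q)}(0)$ are links in lower Chabauty spaces, hence Goresky-MacPherson, and their join $B$ is Goresky-MacPherson by stability of the class under joins. Over the top strata of $B$ the parametrization displays $\pi$ directly as the trivial torus bundle found above. Near a lower stratum I would use that $B$ is locally $\bR^k\times cL_B$, where $L_B$ is assembled from links in the two Chabauty factors, and match this conical structure with the local decomposition of $\Gamma$: driving $\bar\Gamma_1$ (resp. $\bar\Gamma_3$) toward a smaller stratum is precisely a degeneration of $\Gamma_1$ in $\chab(\bR^p)$ (resp. of $\Gamma_3$ in $\chab(\bR^{n-(p+q)})$), whose conical neighborhood is governed by the lower-dimensional instance of this very lemma. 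This yields a local form $\pi^{-1}(V)\simeq\bR^k\times F_s\times cL'$ in which $\pi$ factors through a map $\pi'\colon L'\to L_B$ that is a stratified bundle by the induction hypothesis.

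The main obstacle is exactly this last matching. The delicate point is that as the base point approaches a smaller stratum, the fiber torus must \emph{open up}: the new discrete directions acquired by $\Gamma_1$ or $\Gamma_3$ create fresh coordinates for $\Phi_2$ and $\Phi_3$, so the fiber grows in dimension, and this growth must be organized into the cone $cL'$ compatibly with the distinguished bases and the trivialisations $\tau$ of Section~\ref{sec:parameter}. Once $\pi$ is known to be a stratified bundle over the Goresky-MacPherson base $B$, the first assertion is immediate from the lemma identifying the total space of a stratified bundle as a Goresky-MacPherson stratified space; this also closes the induction and thereby finishes the proof of Theorem~\ref{theo:stratif}.
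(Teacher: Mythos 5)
Your identification of the fibers and the type bookkeeping are correct and agree with the paper's computation, and reading the join structure of the target off the constraint $N_p(\Gamma_1)+N_{p+q+1}(\Gamma_3)^{-1}=\delta/2$ is exactly right. But the inductive scheme you propose for the stratified-bundle property is set up on the wrong variable, and this is a genuine gap. Definition \ref{defi:bundle} is recursive: to certify that $\pi(n,p,q)$ is a stratified bundle you must exhibit, near each point of the base of type $(r,s)$, a local form $\pi^{-1}(V)\simeq\bR^k\times F_s\times cL'$ in which the residual map $\pi'\colon L'\to L_B$ is itself a stratified bundle. Here $L'$ is, by Lemma \ref{lemm:core}, the link $L^n(\bR^r\times\bZ^s)$ of a type-$(r,s)$ point of the total space $L^n(\bR^p\times\bZ^q)$ --- a link in the \emph{same} $\chab(\bR^n)$ --- and $\pi'$ is $\pi(n,r,s)$. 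An induction hypothesis indexed by the ambient dimension $n$ says nothing about $\pi(n,r,s)$. What makes the recursion terminate is not that $n$ drops but that the type strictly increases: the paper runs a decreasing induction on $(p,q)$ for the frontier ordering, with base case $(0,n)$, where $L^n(\bZ^n)=\varnothing$. Your observation that degenerating $\bar\Gamma_1$ or $\bar\Gamma_3$ is a degeneration inside $\chab(\bR^p)$ or $\chab(\bR^{n-(p+q)})$ concerns the base $B$ and its links, which do live in lower Chabauty spaces; it does not supply the required statement about $\pi'$, whose source lives upstairs in $\chab(\bR^n)$.

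Two smaller points. First, deducing the first assertion (that $L$ is Goresky-MacPherson) from the bundle structure leaves the types $(0,0)$ and $(n,0)$ uncovered, since $\pi$ is only defined for the other types; the paper instead proves the first part directly from Lemma \ref{lemm:core} by the same decreasing induction on type, which handles $L^n(0)$ and $L^n(\bR^n)$ uniformly and does not use the bundle at all. Second, the ``delicate matching'' you flag at the end --- the fiber torus opening up as the base point degenerates --- is precisely what the type induction absorbs: near a type-$(r,s)$ point the entire local picture is an instance of the lemma for $(r,s)>(p,q)$, already established at the previous stage of the induction, so no separate matching argument is needed once the induction is set up on the correct index.
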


\begin{proof}
The proof of the first part is by decreasing induction on 
$(p,q)$, with respect to the usual ordering obtained from the condition of frontier.

If $(p,q)=(0,n)$, the link
$L^n(\bZ^n)$ is empty. If $(p,q)<(0,n)$, Lemma \ref{lemm:core} shows
that the link $L=L^n(\bR^p\times\bZ^q)$ is a cone-like TOP stratified space with 
links of the form $L^n(\bR^r\times\bZ^s)$ with $(r,s)>(p,q)$, which are Goresky-MacPherson
stratified by induction hypothesis.

For the second part, first remark that the map $\pi$ restricts to a 
(classical) fiber bundle on each strata, and the
fibers corresponds to the choice of $\Phi_2$ and $\Phi_3$ when given
$\Gamma_1$ and $\Gamma_3$. Each of these maps is defined by the image in a torus
(of respective dimension $p-r$ and $p+q-r$) of a basis of a lattice
(of respective rank $q$ and $(r+s-p-q)$); this gives the claimed topology for the fibers.

Next we proceed by a similar induction than above. 
If $(p,q)\leqslant(0,n)$ and $(\Gamma_1,\Gamma_3,\lambda)$ is a point in  $L^p(\bR^p)\star L^{n-(p+q)}(0)$,
then $(r,s)>(p,q)$ where $r$ is the dimension of the continuous part of $\Gamma_1$ and
$r+s-p-q$ is the rank of $\Gamma_3$. If $U$ is a small enough neighborhood
of $(\Gamma_1,\Gamma_3,\lambda)$ and $V=\pi^{-1}(U)$, then the restriction
of $\pi$ to $V\to U$ writes in the form required by Definition \ref{defi:bundle}
with $\pi'=\pi(n,r,s)$, thus is a stratified bundle by induction hypothesis.
\end{proof}

Note that the singular codimension of $\chab(\bR^n)$ is $n$, thus it is a pseudo-manifold if $n\geqslant 2$:
the proof of Theorem \ref{theo:stratif} is over.

The following will be central in the proof of Theorem \ref{theo:main} (recall 
that $\mathscr{R}_m$ is the subset of rank $n$ element of $\chab(\bR^n)$ and $\mathscr{R}_\ell$
is its complement).

\begin{coro}\label{coro:system}
Any type $(p,q)$ element $\Gamma\in \chab(\bR^n)$ has a neighborhood 
system $(U_\varepsilon)_\varepsilon$ such that $U_\varepsilon$ is contractible
and $U_\varepsilon\smallsetminus \mathscr{R}_\ell$ is pathwise connected.
\end{coro}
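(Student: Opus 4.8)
The plan is to leverage the structural lemmas just proved, especially Lemma \ref{lemm:bundle}, to get a good neighborhood system and analyze both of its required properties separately. By homogeneity of strata (the $\GL(n;\bR)$-action) we may assume $\Gamma=\bR^p\times\bZ^q$. From Lemma \ref{lemm:bundle} we know a neighborhood of $\Gamma$ is of the form $\bR^{(n-p)(p+q)}\times cL$ where $L=L^n(\bR^p\times\bZ^q)$. The natural candidate neighborhood system is $U_\varepsilon=\bR^{(n-p)(p+q)}\times c_\varepsilon L$, where $c_\varepsilon L$ denotes the sub-cone of radius $\varepsilon$ (using the cone coordinate from Section \ref{sec:parameter}). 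The first required property, that $U_\varepsilon$ is \emph{contractible}, is immediate: both a Euclidean factor and an open cone are contractible (a cone $cL$ deformation retracts to its apex via $(x,h)\mapsto(x,(1-t)h)$), so the product is contractible.

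The real content is the second property: that $U_\varepsilon\smallsetminus\mathscr{R}_\ell$ is pathwise connected, i.e. that removing the lower-rank locus does not disconnect the neighborhood. First I would note that the Euclidean factor is harmless, since $\mathscr{R}_\ell$ is a union of strata and the projection $U_\varepsilon\to cL$ respects the stratification; thus it suffices to show $cL\smallsetminus\mathscr{R}_\ell$ is pathwise connected, and since the apex itself lies in $\mathscr{R}_\ell$ (it corresponds to $\Gamma$, whose rank is $p+q<n$ whenever $(p,q)\neq(n,0)$; the case $(n,0)$ being trivial as then $\Gamma\in\mathscr{R}_m$ and a small ball works), this reduces to showing $L\smallsetminus\mathscr{R}_\ell$ is pathwise connected. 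Here the maximal-rank stratum $\mathscr{R}_m$ meets $L$ precisely in the top-dimensional stratum $L^{(n,0)}$ (the generic type), so $L\smallsetminus\mathscr{R}_\ell=L\cap\mathscr{R}_m$ is exactly the open dense top stratum of the link.

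The key tool is the stratified bundle $\pi:L\to L^p(\bR^p)\star L^{n-(p+q)}(0)$ from Lemma \ref{lemm:bundle}. The fiber over the maximal-rank stratum is a single torus, and more importantly the base $L^p(\bR^p)\star L^{n-(p+q)}(0)$ is a join, hence connected (a join of nonempty spaces is always pathwise connected, as every point joins to every point through the join coordinate). I would argue that $L\cap\mathscr{R}_m$ maps onto the open dense stratum of the base under $\pi$, with connected torus fibers, and then assemble pathwise connectedness. Concretely, given two maximal-rank points of $L$, connect their images in the base by a path that can be pushed into the top stratum of the join (using density of the top stratum and the fact that the join construction makes the top stratum connected and dense), then lift this path through the fiber bundle restricted to the top stratum, and finally join the endpoints within the connected torus fibers.

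The hard part will be this last assembly: ensuring the connecting path in the base can be chosen to stay within (or be homotoped into) the maximal-rank stratum over which $\pi$ is a genuine fiber bundle, and that the lift stays in $\mathscr{R}_m$. The cleanest route is an induction on $n$ paralleling the inductive structure of Lemma \ref{lemm:bundle}: the base factors $L^p(\bR^p)$ and $L^{n-(p+q)}(0)$ are themselves links of lower-dimensional Chabauty spaces, so their maximal-rank loci are pathwise connected by the inductive hypothesis, and the join of two pathwise connected spaces is pathwise connected. One must check that the top stratum of the join $L^p(\bR^p)\star L^{n-(p+q)}(0)$ is precisely the join of the two top strata (which follows from the explicit stratification of a join given in Section \ref{sec:strat}, whose product-type strata $(X\star Y)^{(s,t)}$ are maximal exactly when both $s,t$ are), so that connectivity of the top stratum upstairs reduces to connectivity of the two factor top strata downstairs together with connectivity of the torus fibers. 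Handling the base cases $(p,q)=(0,0)$ and $(p,q)=(n,0)$ directly, and treating $p=0$ or $p+q=n$ (where one join factor degenerates) with the conventions of the cone lemmas, completes the argument.
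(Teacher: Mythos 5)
Your overall strategy coincides with the paper's: same neighborhood system $U_\varepsilon=B(0,\varepsilon)\times\ensemble{(t,l)\in cL}{t<\varepsilon}$, same contractibility argument, and the same key tool, namely the stratified bundle $\pi:L^n(\bR^p\times\bZ^q)\to L^p(\bR^p)\star L^{n-(p+q)}(0)$ of Lemma \ref{lemm:bundle}, with path-connectedness deduced from connectedness of the base and of the torus fibers. However, there is a genuine error in the step where you identify $L\cap\mathscr{R}_m$. You assert that $\mathscr{R}_m$ meets $L$ ``precisely in the top-dimensional stratum.'' First, the label is backwards: the dense, top-dimensional stratum is the lattice stratum, of type $(0,n)$, not $(n,0)$ (type $(n,0)$ is the single point $\bR^n$, the \emph{minimum} of the order). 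Second, and more importantly, $\mathscr{R}_m$ is the union of \emph{all} rank-$n$ strata, i.e.\ all types $(r,n-r)$; inside $L=L^n(\bR^p\times\bZ^q)$ it therefore consists of the types $(0,n),(1,n-1),\dots,(p,n-p)$, which is a single stratum only when $p=0$. (For instance $L^3(\bR\times\bZ)$ contains both lattices and type $(1,2)$ subgroups, all of rank $3$.) Consequently your plan --- push a path into the top stratum of the join and lift it through the \emph{genuine} fiber bundle over that stratum --- only connects points already lying in the lattice stratum; it says nothing about how a maximal-rank point of type $(r,n-r)$ with $r>0$ gets connected, within $\mathscr{R}_m$, to the lattice stratum. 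The fibers over these intermediate strata are tori of strictly smaller dimension (by the formula of Lemma \ref{lemm:bundle}), so the restriction of $\pi$ to $L\cap\mathscr{R}_m$ is honestly a stratified bundle, not a fiber bundle.

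The correct observation, which is what the paper uses, is that membership in $\mathscr{R}_m$ constrains only the $\Gamma_3$-factor: $\Gamma_1$ is always of maximal rank in $\bR^p$, so $\Gamma\in\mathscr{R}_m$ if and only if $\Gamma_3$ is a lattice of $\bR^{n-(p+q)}$. Hence $L\cap\mathscr{R}_m$ is the part of the stratified bundle lying over the points of the join whose $L^{n-(p+q)}(0)$-coordinate is a unit-norm lattice; this base is connected (it is essentially a cone on $L^p(\bR^p)$ times the connected set of unit-norm lattices of $\bR^{n-(p+q)}$ --- the paper records it as the join $L^p(0)\star\bigl(L^{n-(p+q)}(0)\bigr)^{(0,n-(p+q))}$), the fibers are connected tori, and local triviality of the stratified bundle then yields path-connectedness of the whole of $L\cap\mathscr{R}_m$ at once, with no need to reduce to a single stratum. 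Your inductive framework for the connectedness of the factors $L^p(\bR^p)$ and of the unit-norm lattices is fine (the paper gets the former from density of the lattice stratum together with connectedness of $\GL(p;\bR)/\GL(p;\bZ)$), but as written your assembly step has a hole exactly when $p>0$.
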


\begin{proof}
First, the links $L^n(0)$ are pathwise connected. Indeed $L^n(0)\cap\mathscr{R}_m$
is a dense strata, so that any point in $L^n(0)$ can be connected to a point
in $L^n(0)\cap\mathscr{R}_m$, which is pathwise connected (homeomorphic to $\GL(n;\bR)/\GL(n;\bZ)$).
 
Taking a neighborhood of $\Gamma$ isomorphic to $U=\bR^{(n-p)(p+q)}\times cL$ where $L=L^n(\bR^p\times\bZ^q)$
and for $U_\varepsilon$ ($\varepsilon\in (0,1)$) the product of the radius $\varepsilon$
ball in $\bR^{(n-p)(p+q)}$ by the part $\ensemble{(t,l)\in cL}{t<\varepsilon}$
of the cone $cL$, we get a neighborhood system such that 
$U_\varepsilon$ is contractible and $U_\varepsilon\smallsetminus \mathscr{R}_\ell$ is a deformation
retract of the total space of a stratified bundle with tori as fibers and
base $$L^p(0)\star \big(L^{n-(p+q)}(0)\big)^{(0,n-(p+q))}$$
The tori are pathwise connected as well as $L^p(0)$. Moreover
$(L^{n-(p+q)}(0)\big)^{(0,n-(p+q))}$ is the set of unit norm lattices
in $\bR^{n-(p+q)}$ and is therefore pathwise connected. The 
pathwise connectedness
of $U_\varepsilon\smallsetminus \mathscr{R}_\ell$ follows.
\end{proof}

\subsection{Complete description of a few links}

Let us consider some explicit examples. We use Lemmas \ref{lemm:Ln0}, \ref{lemm:LnRn}
and \ref{lemm:bundle}.
First, as already noticed, 
$$L^1(0)\simeq\{\bZ\}$$
is reduced to a point  and the case of the open strata is trivial: 
$$L^n(\bZ^n)=\varnothing$$
for all $n$.

\subsubsection{Description of links when $n=2$}

We already said that 
$$L^2(0)\simeq L^2(\bR^2)\simeq S^3$$
where $S^3$ is stratified with strata a trefoil knot and its complement. The proof is contained
in that of the Hubbard-Pourezza theorem, see the last section.

The duality maps type $(1,1)$ elements to type $(0,1)$ ones, so that
we have left to consider only $L^2(\bZ)$ and $L^2(\bR)$.
The link $L^2(\bZ)$ is isomorphic to the set of couples $(\Gamma_3,\Phi_3)$
where $\Gamma_3\in L^1(0)\simeq \{\bZ\}$ and $\Phi_3$ is a homomorphism
$\Gamma_3\to \bR/\bZ$. As a consequence, 
$$L^2(\bZ)\simeq S^1$$
where $S^1$ is stratified with one strata.

The link $L^2(\bR)$ is isomorphic to the set of triples
$(\Gamma_1,\Gamma_3,\Phi_3)$ where $\Gamma_1\in U^1_\varepsilon(\bR)$
is defined by its norm $\alpha$, $\Gamma_3\in U^1_\varepsilon(0)$
is defined by the inverse $\beta$ of its norm,
$\Phi_3$ is a homomorphism from $\Gamma_3\simeq \beta^{-1}\bZ$
to $\bR/\Gamma_1\simeq \bR/\alpha\bZ$, and moreover
$\alpha+\beta$ is constrained to be equal to a constant $\delta/2$.
As a consequence, 
$$L^2(\bR)\simeq S^2$$
where $S^2$ is stratified
with three strata, two of them being reduced to a point (see figure \ref{fig:link}).

\begin{figure}[htp]\begin{center}
\input{link_1_0_2.pstex_t}
\caption{The link $L^2(\bR)$.}\label{fig:link}
\end{center}\end{figure}

\subsubsection{Description of links $L^n(\bZ^{n-1})$}

The link $L^3(\bZ^2)$ is isomorphic to the set of
couples $(\Gamma_3,\Phi_3)$ such that $\Gamma_3\in L^1(0)\simeq\{1\}$
and $\Phi_3$ is a homomorphism from $\bZ$ to $\bR^2/\bZ^2$.
Therefore 
$$L^3(\bZ^2)\simeq T^2$$
where $T^2$ is the $2$-torus stratified with a single stratum. This case is important,
since it shows very simply that $\chab(\bR^3)$ is not a manifold:
$\bZ^2$ has a neighborhood homeomorphic to
$\bR^6\times cT^2$.
The same argument shows 
$$L^n(\bZ^{n-1})\simeq T^{n-1}$$
thus the same conclusion holds for all $n>2$. We see that
$\chab(\bR^2)$ is a manifold only because of a ``happy accident'':
the cone over $T^1$ is a $2$-ball.

\subsubsection{Description of some links in $\chab(\bR^3)$}

Let us give a few more examples without details.
We have
$$L^3(\bR\times\bZ)\simeq T^1\times T^1\times T^1 \times [0,1] /\sim$$
where the quotient is by the relations
$(x,y,z,0)\sim (x',y',z,0)$ and $(x,y,z,1)\sim (x,y',z',1)$.

There is a stratified bundle
$$L^3(\bZ) \to S^3$$
where $S^3$ is stratified by a trefoil knot and its complement, the fibers
of this bundle being $T^2$ (over generic points) and $T^1$ (over singular points).

There is a stratified bundle
$$L^3(\bR)\to \bar{c}S^3$$
where $S^3$ is again stratified by a trefoil knot and its complement and  
$\bar{c} S^3=\{\bullet\}\star S^3$ is the closed cone over $S^3$, the fibers
of this bundle being $T^2$  (over generic points), $T^1$ (over $K\times(0,1)$)
and a point (over the apex of the cone and $S^3\times\{1\}$).

\section{Localization and simple connectedness}\label{sec:global}

In this section we prove our main result. We start with the localization theorem \ref{theo:localization}
and then prove that $\chab(\bR^n)$ is simply connected.

\subsection{Localization}

Let $X$ be a Hausdorff topological space, 
$Y$ be a closed subset of $X$ and $m$ be any non-negative
integer.
Assume that each point $y\in Y$ has an neighborhood system $(U_\varepsilon)$
in $X$ such that the topological pair 
$(U_\varepsilon,U_\varepsilon\smallsetminus Y)$ is $m$-connected.
Let us prove by induction on $k\leqslant m$ that the pair $(X,X\smallsetminus Y)$ is $k$-connected.
In fact, we shall prove a stronger property to run the induction.

For all $k\leqslant m$, we denote by $I^k$ the cube $[0,1]^k$ and by $\partial I^k$ its boundary,
while $0$ denotes the point $(0,0,\ldots,0)$.

Fix some point $x_0\in X\smallsetminus Y$ and let us prove by induction on $k$ that any map 
$\alpha: (I^k,\partial I^k,0)\to (X,X\smallsetminus Y,x_0)$ is homotopic
(with fixed boundary) to a map $\alpha_1: I^k\to X\smallsetminus Y$ through
a arbitrarily small homotopy $(\alpha_t)$. More precisely,
we shall prove that for any compact subsets $K_1,\ldots,K_\ell$ of
$I^k$ and any open subsets $W_1,\ldots,W_\ell$ of $X$ such that
$\alpha(K_j)\subset W_j$, we can ask that for all $t$,
$\alpha_t(K_j)\subset W_j$.

For any point $x$ in the interior of $I^k$, an \emph{open box} around $x$ is a neighborhood of $x$
that writes $I_1\times I_2\times\cdots\times I_k$ where $I_i$ are open intervals
of $[0,1]$ that contain neither $0$ nor $1$.
The \emph{lower corner} of an open box $]a_1,b_1[\times\cdots\times]a_k,b_k[$
is the point $(a_1,a_2,\ldots,a_k)$.

We start with the case $k=0$. We have to prove that any point
in $X$ is connected by an arbitrarily small path to a point in $X\smallsetminus Y$.
This follows directly from the hypothesis that $U_\varepsilon$ is a neighborhood system
and $(U_\varepsilon,U_\varepsilon\smallsetminus Y)$ is $0$-connected.

Assume now that we proved the desired result for maps 
$(I^{k-1},\partial I^{k-1},0)\to (X,X\smallsetminus Y,x_0)$ and let
$\alpha$, $(K_j)$ and $(W_j)$ be as above.

Denote by $\Sigma :=\alpha^{-1}(Y)$ the \emph{singular set}. It is a closed subset
of $I^k$, thus is compact.
For all $s\in\Sigma$, there is a neighborhood
$U(s)$ of $\alpha(s)$ that is pathwise connected and such that $(U(s),U(s)\smallsetminus Y)$ is
$m$-connected. When $s\in K_j$, we can moreover assume that $U(s)\subset W_j$.
Let  $V(s)=\alpha^{-1}(U(s))$
and $B(s)$ be an open box around $s$ such that $\overline{B}(s)\subset V(s)$.
If $s\notin K_j$, we moreover assume that $\overline{B}(s)\cap K_j=\varnothing$.

Since $\Sigma$ is compact, there exist a finite number of points
$s_1,\ldots, s_N\in \Sigma$ such that the $B_i:=B(s_i)$ cover $\Sigma$.
Let us prove that $\alpha$ is homotopic to a map $\alpha_1$ for which $N$ can be reduced.
This will prove the theorem by induction on $N$, since $N=0$ means that $\alpha$
avoids $Y$.

Up to a reordering, we can assume that $B_1$ has its lower corner $x_1$
outside all of the $B_i$. In particular, $x_1\notin\Sigma$.
The restriction $\beta$ of $\alpha$ to the boundary of $B_1$ defines
a element in $\pi_{k-1}(U_1,x_1)$ where $U_1:=U(s_1)$. Since $(U_1,U_1\smallsetminus Y)$
is $m$-connected, $\beta$ is homotopic to a map
$\beta_1:\partial B_1\to U_1\smallsetminus Y$. The induction hypothesis moreover 
enables us to assume that the homotopy $(\beta_t)$
is small enough to ensure $\beta_t(K_j\cap\partial B_1)\subset W_j$
and 
$$\beta_t(\partial B_1\smallsetminus \bigcup_{i=2}^N B_i)\subset X\smallsetminus Y$$
for all $t$.
In particular, the homotopy $\beta_t$ will not add any new singular
part at the next step.

\begin{figure}[htp]\begin{center}
\input{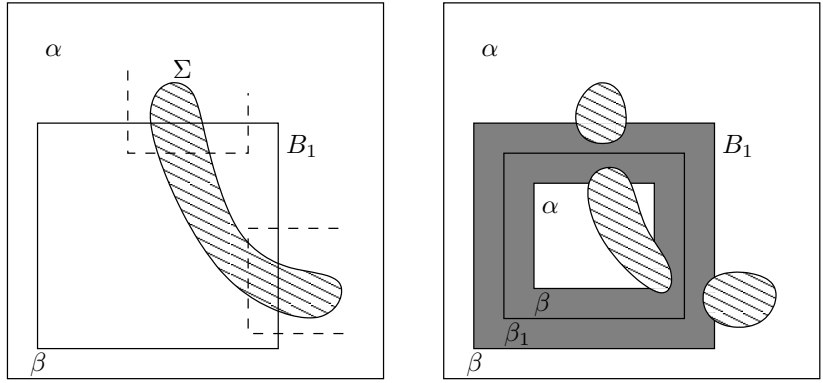}
\caption{On the left, dashed lines represent boxes $B_i$ for $i>1$.
  On the right, in the gray region we composed $\alpha$ with the homotopy from
  $\beta$ to $\beta_1$ and back to $\beta$.}\label{fig:composition}
\end{center}\end{figure}
Composing 
as in figure \ref{fig:composition} the part of $\alpha$ exterior to $B_1$
with the homotopy from $\beta$ to $\beta'$, then with its inverse,
finally with the restriction of $\alpha$ to $\overline{B}_1$, we can assume that $\alpha$
maps $\partial B_1$ in $U_1\smallsetminus Y$ while ensuring that $\Sigma$ is
still covered by the $B_i$.

Now the restriction of $\alpha$ to $\overline{B}_1$ defines an element
of $\pi_k(U_1,U_1\smallsetminus Y,x_1)$ and there is a homotopy
$H:[0,1]\times(B_1,\partial B_1,x_1)\to (U_1,U_1\smallsetminus Y,\alpha(x_1))$
such that $H(0,\cdot)=\alpha_{|\overline{B}_1}$ and $H(1,\cdot)$
takes its values in $U_1\smallsetminus Y$. This homotopy extends to a homotopy between $\alpha$
and a map $\alpha_1$ whose
singular set $\Sigma_1$ is covered by the $N-1$ boxes
$B_2,B_3,\ldots,B_N$. Moreover, our assumptions ensure that for all $j$,
either $B_1\cap K_j=\varnothing$ or $U_1\subset W_j$, therefore our homotopy
is small enough to carry the induction. This finishes the proof.

\subsection{Simple connectedness}

We note $\chab=\chab(\bR^n)$ and recall that $\mathscr{R}_m$ is the set of closed subgroups of maximal rank 
and $\mathscr{R}_\ell$ is its complement.

\begin{lemm}\label{lemm:lrc}
For each $\Gamma\in \mathscr{R}_\ell$, if we denote by $(U_\varepsilon)$ the neighborhood system
given in Corollary \ref{coro:system}, the pair
$$(U_\varepsilon,U_\varepsilon\smallsetminus \mathscr{R}_\ell)$$
 is $1$-connected.
\end{lemm}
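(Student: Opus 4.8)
The plan is to read off the $1$-connectedness of the pair directly from the two properties of $U_\varepsilon$ furnished by Corollary~\ref{coro:system}: that $U_\varepsilon$ is contractible and that $U_\varepsilon\smallsetminus\mathscr{R}_\ell$ is pathwise connected. Writing $A:=U_\varepsilon\smallsetminus\mathscr{R}_\ell$ for brevity, the first property says that $U_\varepsilon$ is in particular path connected and simply connected, while the second says that $A$ is path connected, hence nonempty. These are the only two inputs I intend to use, so that all the genuine content of the statement has already been discharged into the proof of Corollary~\ref{coro:system}.

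First I would dispose of $0$-connectedness of the pair, which also covers the case $\ell=0$ of the definition: given any point $x\in U_\varepsilon$, path connectedness of $U_\varepsilon$ together with nonemptiness of $A$ yields a continuous path inside $U_\varepsilon$ joining $x$ to some point of $A$, and this is exactly what is demanded (a single point of $U_\varepsilon$ being homotoped into $A$ is nothing but such a path).

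The case $\ell=1$ is the only one carrying any weight, and even it is short. Let $\gamma\colon (I,\partial I)\to(U_\varepsilon,A)$ be a path whose two endpoints lie in $A$. Using that $A$ is path connected I would choose an auxiliary path $\gamma'$ lying entirely in $A$ and joining $\gamma(0)$ to $\gamma(1)$. Then $\gamma$ and $\gamma'$ are two paths in $U_\varepsilon$ sharing the same endpoints, and since $U_\varepsilon$ is simply connected any two such paths are homotopic relative to $\partial I$; hence $\gamma$ is homotopic rel boundary to $\gamma'$, which takes its values in $A$. This establishes the required condition and therefore the lemma. I do not expect a real obstacle in this argument: the only point calling for a little care is to make sure the homotopy produced in the $\ell=1$ step genuinely fixes the endpoints, but this is automatic from the standard equivalence between the vanishing of $\pi_1(U_\varepsilon)$ and the uniqueness, up to endpoint-preserving homotopy, of paths with prescribed endpoints.
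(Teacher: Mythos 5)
Your argument is correct and is essentially the paper's: the paper formally quotes the exact sequence $1=\pi_1(U_\varepsilon)\to\pi_1(U_\varepsilon,U_\varepsilon\smallsetminus\mathscr{R}_\ell)\to\pi_0(U_\varepsilon\smallsetminus\mathscr{R}_\ell)=1$ and then immediately unpacks it into exactly the hands-on argument you give (connect the endpoints inside $U_\varepsilon\smallsetminus\mathscr{R}_\ell$ and use simple connectedness of $U_\varepsilon$ to homotope rel endpoints). Your version is if anything slightly more careful about the rel-boundary requirement in the definition of $1$-connectedness of a pair.
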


\begin{proof}
We know that 
$U_\varepsilon$ is contractible, thus pathwise connected
and simply connected. The pair $(U_\varepsilon,U_\varepsilon\smallsetminus \mathscr{R}_\ell)$ is 
in particular $0$-connected.
Moreover $U_\varepsilon\smallsetminus \mathscr{R}_\ell$ is pathwise connected.
But we have an exact sequence 
$$1=\pi_1(U_\varepsilon)\to\pi_1(U_\varepsilon,U_\varepsilon\smallsetminus \mathscr{R}_\ell)
\to\pi_0(U_\varepsilon\smallsetminus \mathscr{R}_\ell)=1$$
thus $\pi_1(U_\varepsilon,U_\varepsilon\smallsetminus \mathscr{R}_\ell)$ is trivial, as desired.

This classical exact sequence is very easy to understand it this case:
any curve in $U_\varepsilon$ whose ends lie in $U_\varepsilon\smallsetminus \mathscr{R}_\ell$ is homotopic
to a curve whose ends coincide and lie in $U_\varepsilon\smallsetminus \mathscr{R}_\ell$, simply because
this set is arc-connected. But since $U_\varepsilon$ is simply connected,
this curve is nullhomotopic, thus homotopic to a curve entirely lying in $U_\varepsilon\smallsetminus \mathscr{R}_\ell$.
\end{proof}

We can now complete the proof of Theorem \ref{theo:main}.
Since $\mathscr{R}_\ell$ is the closure of the strata of type $(0,n-1)$ and thanks to the preceding lemma,
the localization theorem implies that $(\chab,\mathscr{R}_m)$ is simply connected. 
This means that any loop of $\chab$ based at $\bR^n$ is homotopic
to a loop in $\mathscr{R}_m$.

The map defined on $\mathscr{R}_m\times [0,1]$ by $H(\Gamma,t)=t\Gamma$ is a continuous homotopy between
the constant map with value $\bR^n$ and the identity map. Therefore, any loop of $\chab$ is nullhomotopic.
Note that the extension of $H$ on the whole of $\chab$
would not be continuous at $t=0$, since it fixes $0$ but retracts lattices of arbitrarily large norm
to $\bR^n$.

We cannot prove this way that $\chab$ is $2$-connected.
We would indeed need the $2$-connectedness of $(U_\varepsilon,U_\varepsilon\smallsetminus \mathscr{R}_\ell)$
which does not hold. For example, a typical neighborhood for $\bZ^{n-1}$ in $\bR^n$
has the homotopy type of the cone over a $(n-1)$-torus, its intersection with $\mathscr{R}_\ell$ being the apex.
The torus is not simply connected, thus the pair $(cT^{n-1},T^{n-1})$ is not $2$-connected.

\section{The Chabauty space of $\bR^2$ is a $4$-sphere}\label{sec:plane}

\subsection{Definitions and notations}

In this section, we denote by $\chab$ the Chabauty space of $\bR^2$.
A closed subgroup of $\bR^2$ is of one of the following types:
\begin{itemize}
\item $(0,0)$: the trivial subgroup $0$ ;
\item $(0,1)$: isomorphic to $\bZ$ ;
\item $(0,2)$: isomorphic to $\bZ^2$ (these are the lattices) ;
\item $(1,0)$: isomorphic to $\bR$ ;
\item $(1,1)$: isomorphic to $\bR\times\bZ$ ;
\item $(2,0)$: the total group $\bR^2$.
\end{itemize}
Each type is an orbit of the action of $\GL(2;\bR)$ on $\chab$.
The set of lattices is $\mathscr{L}:=\chab^{(0,2)}$, its complement is denoted by $\mathscr{H}$.

A closed subgroup $\Gamma$ of $\bR^2$ has a determinant, or covolume,
$\cov(\Gamma)$. If $\Gamma$ is a lattice, it is its usual
determinant, that is the determinant of any direct base of $\Gamma$.
It is $0$ if $\Gamma$ is isomorphic to
$\bR\times\bZ$ or $\bR^2$, and $\infty$ if $\Gamma$ is isomorphic to
$\bZ$ or $0$. In other words, it is the $2$-dimensional volume
of the quotient $\bR^2/\Gamma$.
By convention, $\cov(\Gamma)$ takes simultaneously
all values in $[0,\infty]$ if $\Gamma$ is isomorphic to $\bR$.
So defined, the levels of $\cov$ are closed in $\chab$. Outside the set
$\mathscr{R}:=\chab^{(1,0)}$ of such subgroups, $\cov$ is a continuous function.
Beware that here, the letter $\mathscr{R}$ refers to $\bR$ (and not to the rank).

Let $\mathscr{C}_{\geqslant 1}$, respectively $\mathscr{C}_{\leqslant1}$, be the subsets
of $\chab$ defined by $\cov\geqslant1$ and $\cov\leqslant1$. These set both
contain $\mathscr{R}$.
Let $\mathscr{H}_{\geqslant1}=\mathscr{H}\cap \mathscr{C}_{\geqslant1}$ be the set of subgroups
isomorphic to $\bR$, $\bZ$ or $0$, and $\mathscr{H}_{\leqslant1}=\mathscr{H}\cap C_{\leqslant1}$
be the set of subgroups isomorphic to $\bR$, $\bR\times\bZ$ or
$\bR^2$.

Let $\mathscr{L}_1$ be the set of covolume $1$ lattices, and
$\mathscr{C}_1$ its closure. Then $\mathscr{C}_1$ is the union of $\mathscr{L}_1$
and of the set $\mathscr{R}$.

We use the usual identification $\bR^2\simeq\bC$,
so that any subgroup isomorphic to $\bR$
can be written in the form $e^{i\theta}\bR$.

We also define as before the norm (or systol)
$$\norme(\Gamma)=N_1(\Gamma)=\inf\ensemble{|x|}{x\in\Gamma\setminus\{0\}}$$
It is a continuous functions taking its values in
$[0,\infty]$.
Let $\mathscr{C}^1$ be the set of norm $1$ subgroups of $\bR^2$. A point
of $\mathscr{C}^1$ is either isomorphic to $\bZ$, or a lattice.
We  denote by $\mathscr{Z}^1$ the set $\mathscr{C}^1\setminus \mathscr{L}$.

Figure \ref{fig:notations} sums up this notations.

\begin{figure}[htp]\begin{center}
\input{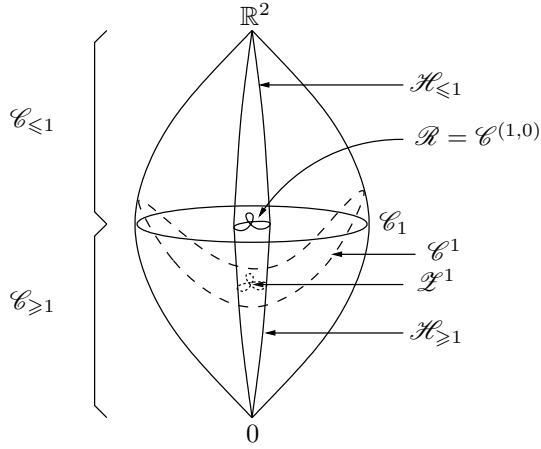}
\caption{Sum up of notations}\label{fig:notations}
\end{center}\end{figure}

The proof of Theorem \ref{theo:HP} is in two parts. We first prove that the topological pair
$(\mathscr{C},\mathscr{H})$ is the suspension of $(\mathscr{C}^1,\mathscr{Z}^1)$, then that the latter is homeomorphic
to $(S^3,K)$ where $K$ is a trefoil knot.

\subsection{The Chabauty space of $\bR^2$ is a suspension}

\begin{lemm}
The topological pair $(\mathscr{C}_{\geqslant1},\mathscr{H}_{\geqslant1})$ is homeomorphic
to the cone over $(\mathscr{C}_1,\mathscr{R})$.
\end{lemm}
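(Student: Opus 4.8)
The plan is to exhibit an explicit homeomorphism $\Phi$ from the cone over $\mathscr{C}_1$ onto $\mathscr{C}_{\geqslant1}$ that sends the apex to the trivial subgroup $0$, the base $\mathscr{C}_1\times\{1\}$ identically onto $\mathscr{C}_1\subset\mathscr{C}_{\geqslant1}$, and the subcone over $\mathscr{R}$ onto $\mathscr{H}_{\geqslant1}$. (Since $\mathscr{C}_{\geqslant1}$ is the closed set $\{\cov\geqslant1\}$ inside the compact space $\chab$, it is compact with boundary $\mathscr{C}_1$, so the relevant cone here is the closed cone; this compactness will also let me get continuity of $\Phi^{-1}$ for free, a continuous bijection of a compact space onto a Hausdorff one being a homeomorphism.) The geometric idea is that a subgroup of covolume $\geqslant 1$ should be read as a covolume-$1$ subgroup ``scaled up'': dilating a covolume-$1$ lattice $\Gamma$ by $\lambda\geqslant 1$ multiplies $\cov$ by $\lambda^2$ and, as $\lambda\to\infty$, pushes $\Gamma$ to $0$, producing the rays through the lattice points of $\mathscr{C}_1$. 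The rays through a line $e^{i\theta}\bR\in\mathscr{R}$ are instead filled by the rank-one discrete groups $\beta e^{i\theta}\bZ$, which degenerate to the line as $\beta\to0$ and to $0$ as $\beta\to\infty$; this is exactly what identifies $\bar c\mathscr{R}$ with $\mathscr{H}_{\geqslant1}$, the set of lines, rank-one discrete groups, and $0$.

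The naive map, scaling by a factor depending only on the cone height, is the wrong one: it keeps an elongated covolume-$1$ lattice near the lines and so fails to be continuous along $\mathscr{R}$. The remedy is to let the dilation factor depend on the base point through its systole $\norme$. Concretely I would set, for a covolume-$1$ lattice $\Gamma$ and height $s\in(0,1]$, $\Phi(\Gamma,s)=\big(\tau(s)+\sigma(s)/\norme(\Gamma)\big)\Gamma$ and $\Phi(e^{i\theta}\bR,s)=\sigma(s)\,e^{i\theta}\bZ$, where $\sigma,\tau$ are fixed continuous reparametrisations with $\tau(1)=1$, $\sigma(1)=0$, $\tau\geqslant 1$ nonincreasing, and $\sigma(s)\to\infty$ as $s\to0$. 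At height $1$ this is the identity on $\mathscr{C}_1$ (the factor is $1$ on lattices, and $0\cdot e^{i\theta}\bZ=e^{i\theta}\bR$ on lines), the image lattices have covolume $\geqslant 1$, and as $s\to0$ the systole of $\Phi(\gamma,s)$ tends to $\infty$ uniformly in $\gamma$ (it is bounded below by $\sigma(s)$), so $\Phi$ extends continuously by $\Phi(\cdot,0)=0$ and descends to the cone.

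It then remains to verify that $\Phi$ is a continuous bijection. Bijectivity is bookkeeping once $\sigma,\tau$ are chosen monotone: a lattice of covolume $c\geqslant 1$ is the image of its rescaling to covolume $1$ at the appropriate height, a rank-one discrete group $v\bZ$ is the image of $(e^{i\arg v}\bR,s)$ for the unique $s$ with $\sigma(s)=|v|$, and $0$ is the apex. The heart of the argument, and the main obstacle, is continuity at the corner locus where covolume-$1$ lattices degenerate to a line: if $\Gamma_k\to e^{i\theta}\bR$ in $\mathscr{C}_1$ then $\norme(\Gamma_k)\to 0$, so for fixed $s<1$ the dilation factor behaves like $\sigma(s)/\norme(\Gamma_k)\to\infty$; the short vector of $\Phi(\Gamma_k,s)$ then has length $\to\sigma(s)$ while the complementary vector is pushed to infinity, whence $\Phi(\Gamma_k,s)\to\sigma(s)\,e^{i\theta}\bZ=\Phi(e^{i\theta}\bR,s)$. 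This is precisely the gluing of the two families of rays, and it is the one place where the systole-weighted dilation is essential; continuity elsewhere and at the apex follows routinely from the continuity of $\cov$ and $\norme$ on the relevant strata. By construction $\Phi$ carries $\bar c\mathscr{R}$ onto $\mathscr{H}_{\geqslant1}$ and its complement onto the lattices of covolume $\geqslant 1$, so $\Phi$ is a homeomorphism of pairs, which proves the lemma.
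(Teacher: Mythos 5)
Your proposal is correct and is essentially the paper's own argument: the paper uses exactly the systole-weighted dilation $(\Gamma_1,t)\mapsto\big(\tfrac{t}{\norme(\Gamma_1)}+1\big)\Gamma_1$ on covolume-one lattices and $(e^{i\theta}\bR,t)\mapsto te^{i\theta}\bZ$ on lines (your $\sigma,\tau$ are just a reparametrisation of its cone coordinate), and likewise concludes via compactness that the resulting continuous bijection of the cone onto $\mathscr{C}_{\geqslant1}$ is a homeomorphism of pairs. The one point you single out as the heart of the matter --- continuity where covolume-one lattices degenerate to a line, forced by the $1/\norme$ factor --- is precisely what the paper's choice of map is designed for.
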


\begin{proof}
We consider the map
\begin{eqnarray*}
\Phi : \mathscr{C}_1\times [0,\infty] & \to & \mathscr{C}_{\geqslant1} \\
       (\Gamma_1,t) &\mapsto& \left\{\begin{array}{l}
         \left(\frac{t}{\norme(\Gamma_1)}+1\right)\Gamma_1\mbox{ if }\Gamma_1\in L_1\\
         te^{i\theta} \bZ\mbox{ if } \Gamma_1=e^{i\theta}\bR 
       \end{array}\right.
\end{eqnarray*}
where by convention $0e^{i\theta}\bZ =  e^{i\theta}\bR$ and $\infty\Gamma=0$ 
if $\Gamma$ is discrete.

This map is continuous, maps $\mathscr{C}_1\times\{0\}$ onto $\mathscr{C}_1$ and $\mathscr{R}\times [0,\infty]$
onto $\mathscr{H}_{\geqslant1}$. It induces a continuous bijection $\tilde\Phi$ from the quotient of $\mathscr{C}_1\times [0,\infty]$
by the relation $(\Gamma_1,\infty)\sim(\Gamma'_1,\infty)$ onto $\mathscr{C}_{\geqslant1}$. Since
the latter is compact, $\tilde\Phi$ is a homeomorphism between the cone over
$(\mathscr{C}_1,\mathscr{R})$ and $(\mathscr{C}_{\geqslant1},\mathscr{H}_{\geqslant1})$.
\end{proof}

\begin{lemm}\label{lemm:norme-covol}
The topological pair $(\mathscr{C}_1,\mathscr{R})$ is homeomorphic to $(\mathscr{C}^1,\mathscr{Z}^1)$.
\end{lemm}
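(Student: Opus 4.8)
The plan is to realize both pairs as cross-sections of the scaling action $\Gamma\mapsto\lambda\Gamma$ ($\lambda>0$) on $\chab$ and to transport one onto the other. On the lattices this action is free, with $\norme(\lambda\Gamma)=\lambda\norme(\Gamma)$ and $\cov(\lambda\Gamma)=\lambda^2\cov(\Gamma)$, so the orbit of a lattice meets $\mathscr{L}_1=\{\cov=1\}$ exactly once and meets $\{\norme=1\}$ exactly once. I would therefore define
$$\Psi:\mathscr{C}_1\to\mathscr{C}^1,\qquad
\Psi(\Gamma)=\left\{\begin{array}{ll}
\norme(\Gamma)^{-1}\Gamma & \mbox{if }\Gamma\in\mathscr{L}_1,\\
e^{i\theta}\bZ & \mbox{if }\Gamma=e^{i\theta}\bR\in\mathscr{R},
\end{array}\right.$$
whose set-theoretic inverse is $\Gamma\mapsto\cov(\Gamma)^{-1/2}\Gamma$ on the norm-$1$ lattices and $e^{i\theta}\bZ\mapsto e^{i\theta}\bR$ on $\mathscr{Z}^1$.

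Since $\norme$ and $\cov$ are continuous and strictly positive on the lattices and scaling is continuous, $\Psi$ restricts to a continuous bijection of $\mathscr{L}_1$ onto the norm-$1$ lattices $\mathscr{C}^1\smallsetminus\mathscr{Z}^1$, and it restricts to the evident homeomorphism $\mathscr{R}\simeq\mathscr{Z}^1$ (both being the circle of directions $\theta\mapsto e^{i\theta}$, with $\theta$ and $\theta+\pi$ identified). Bijectivity of $\Psi$ onto $\mathscr{C}^1$, and the fact that it carries $\mathscr{R}$ onto $\mathscr{Z}^1$, are then immediate from the cross-section description. Moreover $\mathscr{C}_1$ (the closure of $\mathscr{L}_1$) and $\mathscr{C}^1=\norme^{-1}(1)$ are both closed in the compact metrizable space $\chab$, hence compact and Hausdorff; so a continuous bijection between them is automatically a homeomorphism of pairs. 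Everything thus reduces to continuity of $\Psi$, which is clear except possibly along the degenerate stratum $\mathscr{R}$.

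The main obstacle is precisely this continuity at the points of $\mathscr{R}$. By metrizability it suffices to check it on sequences, and splitting a sequence $\Gamma_k\to e^{i\theta}\bR$ into its parts in $\mathscr{R}$ (where the statement is immediate) and in $\mathscr{L}_1$, I am reduced to the case $\Gamma_k\in\mathscr{L}_1$. Continuity of $\norme$ gives $\norme(\Gamma_k)\to\norme(e^{i\theta}\bR)=0$, so the rescaled lattices $\Lambda_k:=\norme(\Gamma_k)^{-1}\Gamma_k$ have a unit shortest vector $w_k=e^{i\phi_k}$ while $\cov(\Lambda_k)=\norme(\Gamma_k)^{-2}\to\infty$. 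I would first show that the directions converge, $e^{i\phi_k}\to\pm e^{i\theta}$: if $e^{i\phi_k}\to e^{i\psi}$ along a subsequence, then $\lfloor t/\norme(\Gamma_k)\rfloor\,\norme(\Gamma_k)\,w_k\in\Gamma_k$ tends to $t\,e^{i\psi}$ for every real $t$, and Chabauty convergence $\Gamma_k\to e^{i\theta}\bR$ forces $t\,e^{i\psi}\in e^{i\theta}\bR$, i.e. $\psi\equiv\theta\ (\mathrm{mod}\ \pi)$. Then, choosing a reduced basis $(w_k,u_k)$ of $\Lambda_k$, the reducedness estimate $|a\,w_k+b\,u_k|\geqslant |b|\,|u_k|\sin\alpha_k$ with $\sin\alpha_k\geqslant\sqrt3/2$ and $|u_k|\geqslant\cov(\Lambda_k)\to\infty$ shows that any bounded sequence of points of $\Lambda_k$ eventually lies in $\bZ w_k$; together with $w_k\to e^{i\theta}$ this gives that every Chabauty limit point of $\Lambda_k$ lies in $e^{i\theta}\bZ$. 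Conversely the inclusion $\bZ w_k\subset\Lambda_k$ with $w_k\to e^{i\theta}$ shows every element of $e^{i\theta}\bZ$ is approximated from $\Lambda_k$, so $\Lambda_k\to e^{i\theta}\bZ=\Psi(e^{i\theta}\bR)$ in the Chabauty topology. This establishes continuity of $\Psi$ and completes the proof.
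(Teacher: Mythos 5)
Your proposal is correct and takes essentially the same route as the paper: the paper defines the inverse map $\mathscr{C}^1\to\mathscr{C}_1$, $\Gamma\mapsto\cov(\Gamma)^{-1/2}\Gamma$ (with $t=0$ on $\mathscr{Z}^1$), observes it is a continuous bijection, and concludes by compactness of $\mathscr{C}_1$ exactly as you do. The only difference is that you spell out the continuity check along the degenerate stratum $\mathscr{R}$ via reduced bases, a verification the paper leaves implicit.
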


\begin{proof}
The map $\Psi: \mathscr{C}^1\to \mathscr{C}_1$ that assigns to $\Gamma$ the only $t\Gamma$
of unit covolume ($t=0$ if $\Gamma$ is isomorphic to $\bZ$,
$t=\cov(\Gamma)^{-1/2}$ otherwise) is continuous and a bijection.
 By compacity of $\mathscr{C}_1$, closed in $\mathscr{C}$, it is a homeomorphism.
\end{proof}

\begin{prop}
The topological pair $(\mathscr{C},\mathscr{H})$ is homeomorphic to the suspension
of $(\mathscr{C}^1,\mathscr{Z}^1)$.
\end{prop}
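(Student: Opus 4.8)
The plan is to realize $(\mathscr{C},\mathscr{H})$ as two cones over $(\mathscr{C}_1,\mathscr{R})$ glued along their common base, which is by definition the suspension. First I would record the combinatorics of the decomposition $\mathscr{C}=\mathscr{C}_{\geqslant1}\cup\mathscr{C}_{\leqslant1}$ and pin down the overlap using the covolume. A subgroup lies in both pieces exactly when $\cov=1$ (giving the lattices $\mathscr{L}_1$) or when $\cov$ is multivalued (the subgroups in $\mathscr{R}$), so that $\mathscr{C}_{\geqslant1}\cap\mathscr{C}_{\leqslant1}=\mathscr{L}_1\cup\mathscr{R}=\mathscr{C}_1$. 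Likewise $\mathscr{H}=\mathscr{H}_{\geqslant1}\cup\mathscr{H}_{\leqslant1}$ with $\mathscr{H}_{\geqslant1}\cap\mathscr{H}_{\leqslant1}=\mathscr{R}$. Thus $(\mathscr{C},\mathscr{H})$ is the union of the pairs $(\mathscr{C}_{\geqslant1},\mathscr{H}_{\geqslant1})$ and $(\mathscr{C}_{\leqslant1},\mathscr{H}_{\leqslant1})$, meeting precisely in $(\mathscr{C}_1,\mathscr{R})$.

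The upper half is already handled by the first lemma of this subsection, which gives a homeomorphism $g_+\colon c(\mathscr{C}_1,\mathscr{R})\to(\mathscr{C}_{\geqslant1},\mathscr{H}_{\geqslant1})$; moreover, by the explicit formula for $\Phi$ (which sends $(\Gamma_1,0)$ to $\Gamma_1$), this $g_+$ restricts to the identity on the base $\mathscr{C}_1$. For the lower half I would invoke the duality map $\ast$ of Section \ref{sec:duality}. Since duality sends a lattice of covolume $v$ to one of covolume $v^{-1}$ and, on the non-lattice strata, realizes the correspondence $(p,q)\mapsto(2-(p+q),q)$, it interchanges $\mathscr{C}_{\geqslant1}$ with $\mathscr{C}_{\leqslant1}$ and $\mathscr{H}_{\geqslant1}$ with $\mathscr{H}_{\leqslant1}$, while fixing $\mathscr{C}_1$ and $\mathscr{R}$ setwise. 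Hence $\ast\circ g_+$ is a homeomorphism $c(\mathscr{C}_1,\mathscr{R})\to(\mathscr{C}_{\leqslant1},\mathscr{H}_{\leqslant1})$, exhibiting the lower half as a cone over the same pair.

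The only real subtlety is the matching along the shared base, and this is where I expect the work to concentrate. The homeomorphism $\ast\circ g_+$ restricts on $\mathscr{C}_1$ not to the identity but to $\sigma:=\ast|_{\mathscr{C}_1}$, a nontrivial self-homeomorphism of the pair $(\mathscr{C}_1,\mathscr{R})$. To glue the two cones along the identity I would replace it by $g_-:=(\ast\circ g_+)\circ c(\sigma^{-1})$, where $c(\sigma^{-1})$ is the cone of $\sigma^{-1}$ (apply $\sigma^{-1}$ at each cone level, fix the apex); then $g_-$ too restricts to the identity on the base. With $g_+$ and $g_-$ both fixing $(\mathscr{C}_1,\mathscr{R})$ pointwise, they assemble into a single map from the suspension $s(\mathscr{C}_1,\mathscr{R})$ — built as two copies of $c(\mathscr{C}_1,\mathscr{R})$ identified along their base — onto $(\mathscr{C},\mathscr{H})$.

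This map is a continuous bijection of pairs, and since $\mathscr{C}$ is compact and Hausdorff it is automatically a homeomorphism. Finally, Lemma \ref{lemm:norme-covol} lets me rewrite $s(\mathscr{C}_1,\mathscr{R})$ as $s(\mathscr{C}^1,\mathscr{Z}^1)$, which is the desired conclusion. Alternatively, one can bypass duality and define a shrinking analogue of $\Phi$ directly on $\mathscr{C}_{\leqslant1}$, sending covolume-$1$ lattices toward $\bR^2$ and lines toward the intermediate $\bR\times\bZ$ subgroups; this produces a $g_-$ restricting to the identity on the base at once, at the cost of rechecking the endpoint conventions ($0\cdot\Gamma_1=\langle\Gamma_1\rangle=\bR^2$ for a lattice, and the perpendicular $\bZ$ escaping to infinity over $\mathscr{R}$).
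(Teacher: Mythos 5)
Your proposal is correct and follows the same route as the paper, which likewise offers the two options of either reproducing the cone argument on $\mathscr{C}_{\leqslant1}$ or applying the duality $\ast$. The paper's own proof is a two-line remark that leaves the base-gluing issue implicit; your correction by $c(\sigma^{-1})$ so that both cone maps restrict to the identity on $(\mathscr{C}_1,\mathscr{R})$ is exactly the detail needed to make that remark into a complete argument.
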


\begin{proof}
We can either reproduce the previous arguments
to prove that $(\mathscr{C}_{\leqslant1},\mathscr{H}_{\leqslant1})$ is also a cone
over $(\mathscr{C}^1,\mathscr{Z}^1)$ or use the duality $\ast$
which maps $\mathscr{C}_{\geqslant1}$ on $\mathscr{C}_{\leqslant1}$ and
preserves $\mathscr{L}$.
\end{proof}

\subsection{Subgroups of unit norm}

To get Theorem \ref{theo:HP}, we have left to prove the following.

\begin{prop}\label{prop:seifert}
The topological pair $(\mathscr{C}^1,\mathscr{Z}^1)$ is homeomorphic to $(S^3,K)$.
\end{prop}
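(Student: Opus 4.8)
The plan is to exhibit $\mathscr{C}^1$ as a Seifert fibred space and to read off both the total space and the isotopy class of $\mathscr{Z}^1$ from its Seifert invariants. The group $\SO(2)$ acts on $\bR^2\simeq\bC$ by multiplication by $e^{i\theta}$, hence on $\mathscr{C}^1$ since rotations preserve the norm. As every subgroup is stable under $-\mathrm{id}$, this action factors through the circle $\mathrm{PSO}(2)=\SO(2)/\{\pm\mathrm{id}\}$, and I would first check that the orbits are circles, so that the orbit map $\mathscr{C}^1\to B:=\mathscr{C}^1/\SO(2)$ is a Seifert fibration. The regular (free) fibres are the orbits of generic lattices and, crucially, the orbit $\mathscr{Z}^1$ itself: a subgroup $e^{i\phi}\bZ$ is fixed exactly by $\pm\mathrm{id}$, so $\mathscr{Z}^1$ is a single free orbit of $\mathrm{PSO}(2)$. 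The only non-free orbits come from lattices with extra rotational symmetry: the square lattice, whose symmetry group is cyclic of order $4$ (order $2$ after dividing by $\pm\mathrm{id}$), and the hexagonal lattice, of order $6$ (order $3$ after dividing). These yield exactly two exceptional fibres, of multiplicities $2$ and $3$.

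Next I would identify the base $B$. Two norm-$1$ lattices lie in the same orbit if and only if they are related by a rotation, that is if and only if they are directly similar (a similarity preserving the norm is a rotation); hence $B$ minus the image of $\mathscr{Z}^1$ is the space of direct similarity classes of lattices, namely the modular orbifold $\mathcal{H}/\mathrm{PSL}(2;\bZ)$, while the orbit $\mathscr{Z}^1$ provides its cusp. Using Gauss reduction I would present this quotient by the standard fundamental domain $\mathcal{F}=\{\tau\in\mathcal{H}:\ |\Re\tau|\leqslant\tfrac12,\ |\tau|\geqslant1\}$, with the square lattice at $\tau=i$, the hexagonal one at $\tau=e^{i\pi/3}$, and the cusp $\Im\tau\to\infty$ corresponding to $\mathscr{Z}^1$. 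After the side identifications and the adjunction of the cusp point this is topologically a $2$-sphere carrying two cone points, of orders $2$ and $3$; in particular the image of $\mathscr{Z}^1$ is a \emph{regular} point of $B$.

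Thus $\mathscr{C}^1$ is an orientable Seifert fibred space over $S^2$ with exactly two exceptional fibres, of orders $2$ and $3$. To conclude I would compute its Seifert invariants: such a fibration is a lens space or $S^3$ according to its Euler number, and $S^3$ occurs precisely when the Euler number equals $\pm\tfrac16$ (equivalently $|\alpha_1\beta_2+\alpha_2\beta_1|=1$ with $\alpha_1=2$, $\alpha_2=3$). I would obtain the invariants from explicit local trivialisations of the fibration near the two exceptional orbits and near $\mathscr{Z}^1$, tracking how the circle fibre winds; this identifies the total space with $S^3$ and exhibits $\mathscr{Z}^1$ as a regular fibre of the $(2,3)$-Seifert fibration of $S^3$. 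A regular fibre of that fibration is the $(2,3)$-torus knot, i.e. the trefoil $K$, yielding the desired homeomorphism of pairs $(\mathscr{C}^1,\mathscr{Z}^1)\simeq(S^3,K)$.

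The main obstacle is this last step: pinning down the Seifert invariants precisely enough to distinguish $S^3$ from the lens spaces $L(p,q)$, and to certify that the regular fibre is genuinely the trefoil, requires a carefully and consistently oriented bookkeeping of the local monodromies, in particular of the gluing across the cusp, where one must verify that the Seifert structure extends smoothly over $\mathscr{Z}^1$ and compute the framing with which a neighbourhood of $\mathscr{Z}^1$ is attached. By contrast, the preliminary reductions (the circle action, the identification of the base, and the location of the exceptional orbits) are comparatively routine once Gauss reduction is in hand.
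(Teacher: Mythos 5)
Your strategy coincides with the paper's: realize $\mathscr{C}^1$ as a Seifert fibration via the $\SO(2)$-action (factoring through $\SO(2)/\{\pm1\}$), identify the base as a $2$-sphere with two cone points of orders $2$ and $3$ coming from the square and hexagonal lattices, observe that $\mathscr{Z}^1$ is a regular fiber, and then pin down the total space and the fiber type by computing the Seifert invariants. All of your preliminary reductions are correct and match the paper.

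However, the step you defer --- the actual computation of the Euler number --- is the heart of the proof, and leaving it as an intention is a genuine gap: without it one cannot distinguish $S^3$ from the other lens spaces fibering over $S^2(2,3)$, nor certify that a regular fiber is a trefoil rather than, say, unknotted. The paper resolves this by constructing an explicit cross-section over the regular part of the base. The naive lift $u\mapsto 1\bZ+u\bZ$ of the fundamental domain is \emph{not} continuous: the identification $z\sim-\bar z$ on the unit-circle edge matches $1\bZ+e^{i(\pi/2-\theta)}\bZ$ with $1\bZ+e^{i(\pi/2+\theta)}\bZ$ only up to a nontrivial rotation, and it is precisely this twist that makes the Euler number nonzero. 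The paper corrects the section to $\sigma(u)=e^{if(u)}(1\bZ+u\bZ)$ with $f$ supported near that edge, then reads off in the boundaries of invariant neighborhoods of the two exceptional fibers that $2d_1+b$ and $3d_2-b$ are meridians, giving unnormalized invariants $(0\mid(2,1),(3,-1))$ and Euler number $1/2-1/3=1/6$; a parallel computation for the $(2,3)$-action on the unit sphere of $\bC^2$ yields the same data, which closes the argument. So your proof needs this explicit section (or an equivalent monodromy bookkeeping) to be complete; as written, it stops exactly where the real work begins.
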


The proof runs over the rest of the Section.
We shall describe $\mathscr{C}^1$ as a Seifert fibration. Let $\Gamma$
be a point of $\mathscr{C}^1$. The isometry group $\SO(2)$ acts on $\mathscr{C}^1$,
and up to a rotation we can assume that $1\in\Gamma\subset\bC$.
Then $\Gamma$ is determined by the choice of a second vector
in the fundamental domain
$$D=\{z\in\bC;| z|\geqslant 1\mbox{ and }-1/2\geqslant \repart(z)\geqslant1/2\}\cup\{\infty\}$$
where $z=\infty$ means that $\Gamma$ is isomorphic to $\bZ$ (figure \ref{fig:domaine_fondamental}).
Identifying the points of $D$ that represent the same $\Gamma$
leads to the quotient of $D$ by the relation $z\sim z-1$ if $\repart(z)=1/2$ and
$z\sim -\bar z$ if $| z|=1$, turning it into a $2$-sphere denoted by $B$, that
will be the base of the Seifert fibration.

\begin{figure}[htp]\begin{center}
\includegraphics[scale=.7]{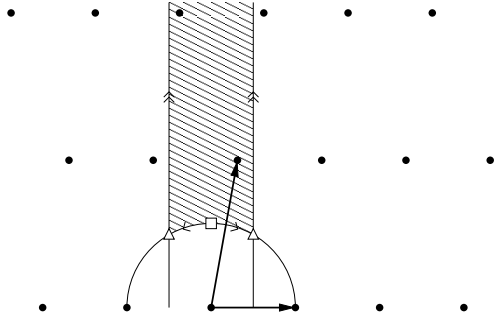}
\caption{Fundamental domain: the vertical lines and the circle arcs are glued according
         to the arrows, $\square$ and $\triangle$ are the singular points.}\label{fig:domaine_fondamental}
\end{center}\end{figure}

The kernel of the action of $\SO(2)$ is reduced to $\{\pm 1\}$, and the quotient
gives an action of the circle that is almost free: the only points of $\mathscr{C}^1$
that have nontrivial stabilizers are the triangular lattices (stabilizer
of order $3$) and the square lattices (stabilizer of order $2$). It follows
that $\mathscr{C}^1$ is a Seifert fibration with base $B\simeq S^2$ and two singular fibers
of order $2$ and $3$, and where $\mathscr{Z}^1$ is a regular fiber. The unnormalized Seifert invariants
of $\mathscr{C}^1$ are $(0|(2,\beta_1);(3,\beta_2))$ and we have left
to find the rational Euler number $\beta_1/2+\beta_2/3$ to
determine $(\mathscr{C}^1,\mathscr{Z}^1)$.

We first choose a cross-section of the regular part of the Seifert fibration. It would be
natural to lift each point $u$ in the fundamental domain to the subgroup generated
by $u$ and $1$, but this would not define a continuous cross-section. The
gluing of the unit circle indeed identifies, for all $\theta\in[0,\pi/6]$,
the subgroups $1\bZ+e^{i(\pi/2-\theta)}\bZ$ and $1\bZ+e^{i(\pi/2+\theta)}\bZ$ by a rotation
of angle $\pi/2+\theta$. We shall therefore modify this cross-section in a neighborhood
of one of the circular arcs of $D$.

Let $S^1=\bR/\pi\bZ$ be the quotient  $\SO(2)/\{\pm1\}$, $D'$ be the
fundamental domain $D$ minus the singular points ($i$, $e^{i\pi/3}$ and $e^{2i\pi/3}$)
and $B'$ be the base $B$ minus the two singular points (corresponding to $i$ and
$e^{i\pi/3}\sim e^{2i\pi/3}$). We choose a continuous map $f:D'\to [0,\pi/2]$
that is constant with value $0$ except in a neighborhood of the arc 
$\ensemble{e^{i(\pi/2+\theta)}}{\theta\in]0,\pi/6[}$,
where it satisfies $f(e^{i(\pi/2+\theta)})=\pi/2-\theta$. We then define a cross-section
$\sigma:B'\to \mathscr{C}^1$ by $\sigma(u)=e^{if(u)}(1\bZ+u\bZ)$. It is continuous
since
$$\sigma(e^{i(\pi/2+\theta)})=e^{i(\pi/2-\theta)}(1\bZ+e^{i(\pi/2+\theta)}\bZ)
                             =e^{i(\pi/2-\theta)}\bZ+1\bZ
                             =\sigma(e^{i(\pi/2-\theta)})$$

Let $b$ be the homotopy class in $\mathscr{C}^1$ of a regular fiber, $d_1$ and $d_2$ be the homotopy
classes defined by $\sigma$ on the boundary of $\mathscr{C}^1_\bullet=\mathscr{C}^1\setminus\{F_1,F_2\}$
where $F_1$ and $F_2$ are invariant neighborhoods of the singular fibers of order
$2$ and $3$, respectively.
\begin{figure}[htp]\begin{center}
\input{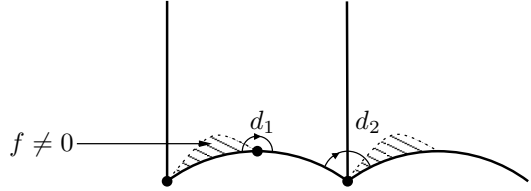}
\caption{The cross-section $\sigma$ defines homotopy classes in the boundary of $\mathscr{C}^1_\bullet$.}
\end{center}\end{figure}

In $\partial F_1$ and $\partial F_2$ respectively, we get that $2d_1+b$ and $3d_2-b$ are homotopic to meridians
(see figure \ref{fig:recol} where $F_1$ and $F_2$ are pictured with coordinates
$(u,\varphi)\in B\times \bR/\pi\bZ \mapsto e^{i\varphi}(1\bZ+u\bZ)$, with the suitable
identifications). It follows that $\mathscr{C}^1$ has unnormalized Seifert invariants
$(0|(2,1),(3,-1))$ and rational Euler number equal to $1/2-1/3=1/6$.

\begin{figure}[htp]
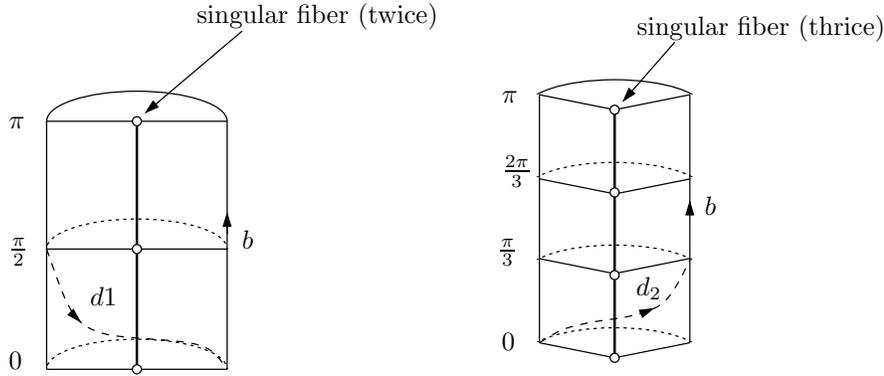
\begin{center}
\parbox{.45\textwidth}{\input{recol1.pstex_t}}\hspace{.05\textwidth}\parbox{.45\textwidth}{\input{recol2.pstex_t}}
\caption{Neighborhood $F_1$ and $F_2$ of the singular fibers}\label{fig:recol}
\end{center}\end{figure}

We shall know exhibit a very classical Seifert fibration on  $S^3$ whose regular fibers
are trefoil knots, that has base $S^2$, two singular fibers of order $2$ and $3$ and
rational Euler number $1/6$.
Since a Seifert fibration is determined by these data, we will conclude that $(\mathscr{C}^1,\mathscr{Z}^1)$ is
homeomorphic to $(S^3,K)$.

Consider the following action of the circle $\bR/\bZ$
on $S^3$, identified to the unit sphere of $\bC^2$:
$$s\cdot(z_1,z_2)=(e^{2\pi m_1is} z_1,e^{2\pi m_2is} z_2)$$
with $m_1=2$ and $m_2=3$.
The stabilizer of almost every point is trivial, the exceptions being
the polar orbits $(z_1,0)$ and $(0,z_2)$.
If $m_1$ and $m_2$ where equal to $1$, we would get the Hopf
fibration where the non-polar orbits are Villarceau circles
of the tori $|z_1/z_2|=c$, where $c$ runs over $[0,\infty]$.
Taking $m_1=2$ and $m_2=3$, we replaced the Villarceau circle by toric
knots, here trefoil knots (figure \ref{fig:trefle}).

\begin{figure}[htp]\begin{center}
\includegraphics[scale=.2]{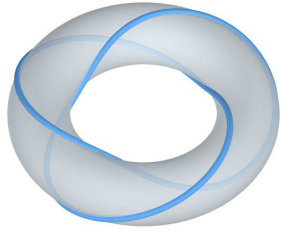}
\caption{The torus knot $(2,3)$ is a trefoil knot.}\label{fig:trefle}
\end{center}\end{figure}

 We see that
the regular part of the base is foliated by the circles obtained
by quotienting the tori $|z_1/z_2|=c$ by the action of $S^1$, and is therefore
an annulus. One can see this annulus as the $S^2$ base of the Hopf fibration
minus two points for the singular fibers.

Let us compute the Seifert invariants of this action, which are surprisingly
difficult to find in the litterature. We use a representation found in
\cite{Massot}.

Let $T^2=\bR/\bZ\times \bR\bZ$ be the standard $2$-torus
equiped with the foliation by straight lines of slope $3/2$. If we denote
by $x$ the homotopy class of 
\begin{eqnarray*}
\bR/\bZ  &\to&   T^2 \\
t        &\mapsto& (t,0)
\end{eqnarray*}
and by $y$ the homotopy class of
\begin{eqnarray*}
\bR/\bZ  &\to&   T^2 \\
t        &\mapsto& (0,t)
\end{eqnarray*}
the homotopy class of any leave of this foliation is $\ell=2x+3y$.

In the space $T^2\times[0,1]$ define $T_t:=\mathbb{T}^2\times\{t\}$,
endowed with the above foliation for $t\in (0,1)$.
Let $\Pi:T^2\times[0,1]\to S^3$ be the mapping defined as follows.
First, $\Pi$ contracts $T_0$ to the singular fiber $\ensemble{(0,z_2)}{|z_2|=1}$
and $T_1$ to the singular fiber $\ensemble{(z_1,0)}{|z_1|=1}$
with $\Pi(a,b,0)=(0,e^{2i\pi b})$ and $\Pi(a,b,1)=(e^{2i\pi a},0)$.
Second, it maps $T_t$ to a torus defined by $|z_1/z_2|=c(t)$
with $c$ an increasing continuous function such that  $c(t)\to 0$ (resp. $+\infty$) when
$t\to 0$ (resp. $1$), and maps the foliation of $T_t$ to the Seifert
foliation in $S^3$. Think of $T^2\times[0,1]$ as a blow-up
of $S^3$ along the singular fibers.

The point is that in this presentation, one can give explicitely a cross-section
of the Seifert fibration over the regular part: just consider the set
$$\ensemble{(s,2s,t)}{s\in\bR/\bZ, t\in(0,1)}\subset T^2\times(0,1)$$ 
This set intersects each of the $T_t$ along a straight line
homotopic to $x+2y$, which intersects each $2x+3y$ line once, thus it does define a section.

In the boundary of a neighborhood of $T_0$, the section defines a curve
homotopic to $d_0=-x-2y$ (the sign depends upon the choice of orientation).
 Since $\ell=2x+3y$ is
the homotopy class of a regular fiber, we have
$3d_0+2\ell=x$, a meridian.
Similarly, in the boundary of a neighborhood of $T_1$,
the section defines a curve homotopic to $d_1=x+2y$ 
and $2d_1-\ell=y$ is a meridian.

Therefore, this Seifert fibration has unnormalized invariants
$(0|(3,2),(2,-1))$ and rational Euler number $2/3-1/2=1/6$ as needed.

\begin{rema}
It is well known (see e.g. \cite{Rubinstein-Gardiner}) that
$\SL(2;\bR)/\SL(2;\bZ)$ is homeomorphic to the complement
of a trefoil knot in $S^3$. This can be given an alternative
proof using the same methods we used to prove Theorem \ref{theo:HP}.
Lemma \ref{lemm:norme-covol} indeed shows that this homogeneous
space is homeomorphic to $\mathscr{C}^1\setminus \mathscr{Z}^1$. The study of the Seifert
fibration on $\mathscr{C}^1$ implies this result,
but the study of $\mathscr{C}^1\setminus \mathscr{Z}^1$ is in fact simpler. More precisely,
we do not need to study how the singular fibers are glued to the regular
part, the non-compacity enabling one to assume $\beta_1=1$ and
$\beta_2=1$ or $2$. Moreover, the difference between $\beta_2=1$
and $\beta_2=2$ $(\equiv -1\mod3)$ lies only in the orientation, and the result holds
regardless (but then we do not know if, for a given orientation of $\SL(2;\bR)/\SL(2;\bZ)$,
we obtain the complement of a right or left trefoil knot).
\end{rema}

\begin{rema}
Christopher Tuffley studied \cite{Tuffley} the spaces $\exp_k(S^1)$ of all 
non-empty subset of the circle of cardinality at most $k$. In particular,
he proved using Seifert fibrations that $\exp_3(S^1)$ is a $3$-sphere,
its subset $\exp_1(S^1)$ being a trefoil knot.

The similarity with Proposition \ref{prop:seifert} is not fortuitous:
Jacob Mostovoy proved \cite{Mostovoy} by a simple geometric argument  that $(\exp_3(S^1),\exp_1(S^1))$ is homeomorphic
to $(\mathscr{C}^1,\mathscr{Z}^1)$. Combining these two results one gets another Seifert fibration
proof of Proposition \ref{prop:seifert}. Note that even the Seifert part is somewhat different from ours,
since it is first proved that $\exp_3(S^1)$ is simply connected, which reduces drastically
its possible Euler numbers.
\end{rema}

\begin{rema}
A nice feature of the study of $\exp_3(S^1)$ is that its subset $\exp_2(S^1)$ is
easily seen to be a M{\"o}bius strip, with boundary  $\exp_1(S^1)$: we recover
the fact that a trefoil knot bounds a M{\"o}bius strip.
This can be seen in $(\mathscr{C}^1,\mathscr{Z}^1)$ as well: over the vertical line $L=\ensemble{iy}{y\in [1,+\infty]}$
of the base $B$, the Seifert fibration is a closed M{\"o}bius strip whith boundary $\mathscr{Z}^1$,
obtained by identifying antipodal points of the $(y=1)$ boundary component of the strip $L\times S^1$.
\end{rema}

\section{A few open questions}\label{sec:open}

There are many question left open concerning $\chab(\bR^n)$. Let us consider some of them
that seem of special interest.

\begin{enumerate}
\item Determine whether $\chab(\bR^n)$ is stratified in the sense of Thom or Mather.
It would for example imply that it can be triangulated (\cite{Johnson,Goresky}).
More ambitiously, determine if we can endow $\chab(\bR^n)$ with the structure of an algebraic variety.
This question is motivated by the original proof of the Hubbard-Pourezza theorem,
where the link $L^2(0)$ is described by algebraic means.
\item Compute the intersection homology of $\chab(\bR^n)$.
\item Describe explicitely $\chab(\bR^3)$, or at least the set $L^3(0)$ of unit norm subgroups
 of $\bR^3$.
\end{enumerate}

\bibliographystyle{smfplain}
\bibliography{biblio}

\end{document}